\DeclareMathAlphabet{\mathpzc}{OT1}{pzc}{m}{it}
\newtheorem{te}{Theorem}[section]
\newtheorem{os}[te]{Remark}
\newtheorem{prop}[te]{Proposition}
\newtheorem{ex}[te]{Example}
\newtheorem{coro}[te]{Corollary}
\numberwithin{equation}{section}
\def \l { \left( }
\def \r {\right) }
\def \ll { \left\lbrace }
\def \rr { \right\rbrace }
\newcommand{\mmm}[1]{{\textcolor{black}{#1}}}
\newcommand{\bt}[1]{{\textcolor{black}{#1}}}
\newcommand{\MMM}[1]{{\textcolor{black}{#1}}}
\newcommand{\rev}[1]{{\textcolor{black}{#1}}}
\newcommand{\MMMrev}[1]{{\textcolor{black}{#1}}}
\newcommand{\btrev}[1]{{\textcolor{black}{#1}}}
\newcommand{\btrevv}[1]{{\textcolor{black}{#1}}}
\newcommand{\mm}[1]{{\textcolor{black}{#1}}}
\newcommand{\frev}[1]{{\textcolor{black}{#1}}}
\newcommand{\lrev}[1]{{\textcolor{black}{#1}}}
\begin{document}

	\title[]{Relaxation patterns and semi-Markov dynamics}

\author{Mark M. Meerschaert}
\address{Department of Mathematics, University of Washington, C528 Padelford Hall, Box 354350, Seattle WA 98195, USA \\
\and Department of Statistics and Probability, Michigan State University}
\email{mcubed@stt.msu.edu}

				\author{Bruno Toaldo}
	\address{Dipartimento di Matematica e Applicazioni ``Renato Caccioppoli'' - Universit\`a degli studi di Napoli ``Federico II"}
		\email{bruno.toaldo@unina.it}

	\keywords{Relaxation, fractional calculus, Bernstein function, semi-Markov process, continuous time random walk, semigroup}
	
	\date{\today}
	\subjclass[2010]{60K15, 60J35, 34L10}
\thanks{M. M. Meerschaert was partially supported by ARO grant W911NF-15-1-0562 and NSF grants EAR-1344280 and DMS-1462156}

\begin{abstract}
Exponential relaxation to equilibrium is a typical property of physical systems, but inhomogeneities are known to distort the exponential relaxation curve, leading to a wide variety of relaxation patterns.  Power law relaxation is related to fractional derivatives in the time variable.  More general relaxation patterns are considered here, and the corresponding semi-Markov processes are studied.  Our method, based on Bernstein functions, unifies three different approaches in the literature.
\end{abstract}

	\maketitle

\tableofcontents

\section{Introduction}
Relaxation phenomena in complex systems can deviate from the traditional exponential model.  In a heterogeneous system, a linear combination of exponential curves with varying rates can lead to power law relaxation, or a variety of other forms.  Power law (Cole-Cole) relaxation and Havriliak-Negami relaxation (transitioning between power laws frequency changes) are commonly seen in complex materials, including polymers, disordered crystals, supercooled liquids, and amorphous semiconductors \citep{havriliak,rilass,scher}.  The connection between relaxation and continuous time random walk (CTRW) models is reviewed in \cite{weron12}.  In the CTRW model, particle motions $X_n$ are separated by random waiting times $W_n$, and the long-time limiting particle density solves an evolution equation that incorporates the relaxation curve.  One famous example is the fractional Fokker-Planck equation for subdiffusive particle motions in a potential well, where delays in particle motion caused by sticking or trapping with a power law distributed waiting time lead to a fractional time derivative in the evolution equation for \rev{the} particle density \cite{HLS10PRL,Magd09a,Magd09b,MBK99,SK06}.  More general waiting time distributions lead to a variety of pseudo-differential operators in time \cite{meertri} that model general relaxation patterns.

\rev{In this paper we apply} the theory of Bernstein functions to unify the three main approaches to relaxation modeling \rev{that are} exemplified by the work of Meerschaert and Scheffler \cite{meertri}, Toaldo \cite{toaldopota}, and Magdziarz and Schilling \cite{magda}.  We show that all three approaches are equivalent, and we establish the correspondence between \rev{the model evolution} equations using conjugate Bernstein functions \cite{librobern}.  We establish some properties of solutions using regular variation theory \cite{bingam,Feller}, and \rev{we apply these solutions to construct general} semi-Markov (CTRW) particle models. The forward and backward Kolmogorov equations for Markov processes on a discrete state space are generalized to the semi-Markov case, and the classification of states into transient or \rev{recurrent} is discussed.  More general evolution equations, solved by time-changed (relaxed) semigroups on a Hilbert space, are also considered.

\btrevv{To illustrate the main ideas of this paper, we briefly consider a special case.  For $0<\beta<1$, the time-fractional diffusion equation
\[\partial^\beta_t u(x,t)=\partial_x^2 u(x,t)\]
using the Caputo fractional derivative in time [40, Eq. (2.16)] is equivalent to
\[{\mathbb D}^\beta_t u(x,t)=\partial_x^2 u(x,t)+u(x,0)\frac{t^{-\beta}}{\Gamma(1-\beta)}\]
using the Riemann-Liouville  fractional derivative in time [40, Eq. (2.17)] since these two fractional derivatives are related by
$\partial_t^\beta f(t)={\mathbb D}_t^\beta f(t)-f(0){t^{-\beta}}/{\Gamma(1-\beta)}$
for all $0<\beta<1$  [40, Eq. (2.33)].  Applying ${\mathbb D}_t^{1-\beta}$ to both sides yields a third equivalent form
\[\partial_t u(x,t)={\mathbb D}_t^{1-\beta}\partial_x^2 u(x,t)\]
using a traditional first derivative on the left-hand side.  The first form is compact, the second highlights the initial condition, and the third is most useful if one wishes to add a forcing term.  Complete details of the equivalence can be found in Example \ref{Sec4Ex1}.  Our main goal in this paper is to establish and understand the corresponding equivalence for a general class of time-nonlocal diffusion equations.  The main technical difficulty is to find an appropriate operator to apply to both sides, to convert the nonlocal time operator to a first derivative in time.  It turns out that the key is to interpret the general case of these three equations in terms of conjugate Bernstein functions.}

\section{Relaxation patterns}

\subsection{Some basic facts on Bernstein functions and subordinators}
In order to go on with the results we \rev{recall basic facts from the} theory of Bernstein functions and subordinators (see \citet{bertoinb, bertoins, librobern} for {more details}). A Bernstein function $f:(0, \infty) \mapsto [0,\infty)$ is defined to be of class $C^\infty$ and such that $(-1)^{n-1}f^{(n)}(\rev{\phi})\geq 0$ for all $n \in \mathbb{N}:=\{1,2,3,\ldots\}$. A function $f$ is a Bernstein function if and only if \cite[Thm 3.2]{librobern} it can be written in the form
\begin{align}
f(\phi) \, = \, a+b\phi + \int_0^\infty \l 1-e^{-\phi s} \r \nu(ds),
\label{defbern}
\end{align}
where $a$ and $b$ are non-negative constants and $\nu(\cdot)$ is a \rev{measure} on $(0, \infty)$ such that the integrability condition
\begin{align}
\int_0^\infty (s \wedge 1) \nu(ds) < \infty
\label{intcondlev}
\end{align}
is fulfilled. The triplet $\l a,b,\nu \r$ is said to be a L\'evy triplet. An integration by parts of \eqref{defbern} yields
\begin{align}
\phi^{-1} f(\phi) \, = \, b+ \int_0^\infty e^{-\phi s}  \bar{\nu}(s) ds,
\label{intpart}
\end{align}
where $\bar{\nu}(s) = a+\nu(s, \infty)$. It follows from \cite[Corollary 3.7 (iv)]{librobern} that \eqref{intpart} is a completely monotone function, i.e., it is $C^\infty$ and such that
\begin{align}
(-1)^n\frac{d^n}{d\phi^n}\l \phi^{-1}f(\phi) \r \geq 0, \textrm{ for all } n \in \mathbb{N} \, \rev{\cup \ll 0 \rr}.
\end{align}
A particular \rev{subset of the set of} Bernstein functions is the set of special Bernstein functions. A Bernstein function $f$ is said to be special if $f^\star (\phi) = \phi /f(\phi)$ is again a Bernstein function. The function $f^\star$, which is also special, is \rev{called} the conjugate of $f$ and has the representation
\begin{align}\label{defbern*}
f^\star (\phi)\, \mmm{=\frac{\phi}{f(\phi)}}\, = \, a^\star + b^\star \phi + \int_0^\infty \l 1-e^{-\phi s} \r \nu^\star (ds),
\end{align}
where \cite[p. 93]{librobern}
\begin{align}
b^\star  \, = \,
\begin{cases}
0, \qquad & b > 0, \\
\frac{1}{a + \nu(0, \infty)}, & b =0,
\end{cases},
\; a^\star \, = \, \begin{cases}
0, \qquad &a >0, \\
\frac{1}{b + \int_0^\infty t \nu(dt)}, & a = 0.
\end{cases}
\label{asbs}
\end{align}
In what follows we will also need complete Bernstein functions. A Bernstein function $f$ is said to be complete \cite[Def 6.1]{librobern} if the density $\nu(s)$ of the L\'evy measure $\nu(ds)\,\mmm{=\nu(s)\,ds}$ appearing in \eqref{defbern} \rev{exists and} is a completely monotone function. According to \cite[Thm 6.2, (ii)]{librobern} we have that a Bernstein function $f$ is complete if and only if $\phi \mapsto \phi^{-1}f(\phi)$ is a (non-negative) Stieltjes function, i.e., a function $h:(0, \infty) \mapsto [0, \infty)$ which can be written in the form
\begin{align}\label{hDef}
h(\phi) \, = \, \frac{a}{\phi} + b + \int_0^\infty \frac{1}{\phi + s} \mathfrak{s}(ds),
\end{align}
where $\mathfrak{s}$ is a measure on $(0, \infty)$ such that
\begin{align}
\int_0^\infty (1+s)^{-1} \mathfrak{s}(ds) < \infty.
\end{align}
It is also true that $f$ is complete if and only if the conjugate $f^\star(\phi) = \phi / f(\phi)$ is complete, \MMM{and hence every complete Bernstein function is special} \cite[Prop 7.1]{librobern}.

Bernstein functions are naturally associated with subordinators which are non-decreasing L\'evy processes. The one-dimensional distributions of a subordinator form a convolution semigroup of sub-probability \rev{measures on $[0, \infty)$, i.e., a family of measures $\ll \mu_t (\cdot) \rr_{t \geq 0}$ supported on $[0, \infty)$} such that
\begin{enumerate}
\item $\mu_t\rev{ [0,\infty)} \leq 1$,
\item $\mu_t * \mu_s = \mu_{t+s}$ for all $s, t \geq 0$,
\item $ \mu_t \to \delta_0$ vaguely as $t \to 0$\MMMrev{,}
\end{enumerate}
\MMMrev{and moreover the Laplace transform}
\begin{align}
\rev{\widetilde{\mu}_t(\MMMrev{\phi}) \, = \, } \mathcal{L} \left[ \mu_t (\cdot) \right] (\phi) \MMMrev{\, = \, \int_0^\infty e^{-\phi x}\mu_t(dx)}\, = \, e^{-tf(\phi)},
\label{laplcs}
\end{align}
where $f(\phi)$ is a Bernstein function. We will denote by $\sigma^f(t)$, $t\geq 0$, the subordinator with Laplace exponent $f$. If $f$ is a special Bernstein function then the corresponding subordinator is also \rev{called} special. The following facts will be used throughout the paper. A subordinator is special if and only if \cite[Thm 10.3]{librobern} its potential measure
\begin{align}
U^{\sigma^f}(dt) \, : = \, \mathds{E} \int_0^\infty \mathds{1}_{\ll \sigma^f(s) \in dt \rr}ds \, = \, c \delta_0(dt) + u_{\sigma^f}(t) dt,
\label{potmeas}
\end{align}
for some $c \geq 0$ and some non-increasing function $u_{\sigma^f}:(0, \infty) \mapsto (0, \infty)$ satisfying $\int_0^1 u_{\rev{\sigma^f(t)}}(t) dt < \infty$.
In particular from \cite[Corollary 10.8]{librobern} we know that if $b =0$ and $\nu(0, \infty)=\infty$ then $c=b^\star =0$, $\nu^\star (0, \infty) = \infty$, and
\begin{align}
u_{\sigma^f}(t) \, = \, a^\star  + \nu^\star (t, \infty)\ \MMM{=\bar\nu^\star(t)}.
\label{213}
\end{align}

\subsection{Relaxation patterns}
Meerschaert and Scheffler \cite{meertri} develop limit theory for the continuous time random walk (CTRW) model from statistical physics.  Given an iid sequence of jumps $J_n$ and an iid sequence of waiting times $W_n$, a particle jumps to location $S_n=J_1+\cdots+J_n$ at time $T_n=W_1+\cdots+W_n$. Given a convergent triangular array of CTRW models, they show \cite[Theorem 2.1]{meertri} that the limit process is of the form $X(L^f(t))$ where $X(t)$ is the limiting L\'evy process for the random walk of jumps, time changed by the inverse subordinator
\begin{align}\label{EtDef}
L^f(t) \, = \, \inf \ll s \geq 0 : \sigma^f(s) >t \rr.
\end{align}
Kolokoltsov \cite{KoloCTRW} extended the model to a Markov process limit $X(t)$ by allowing the distribution of the jumps $J_n$ to vary in space.  In both cases (under some mild conditions, see \cite[Theorem 4.1]{meertri} and \cite[Theorem 4.2]{KoloCTRW}) the probability densities $p(x,t)$ of the CTRW limit solve a governing equation
\begin{align}\label{CTRWgov}
\mathbb{C}_f(\partial_t)p(x,t)=Ap(x,t),
\end{align}
where $A$ is the generator of the Markov semigroup, and the Caputo-like operator \MMMrev{$\mathbb{C}_f(\partial_t)$ is defined so that
\begin{align}
\mathcal{L}\left[ \mathbb{C}_f(\partial_t)u \right](s)\, = \, f(\phi) \widetilde{u} (\phi) - \phi^{-1}f(\phi) u(0)
\label{gencap}
\end{align}
where $\mathcal{L}\left[u\right](\phi)= \widetilde{u}(\phi)=\int_0^\infty e^{-\phi t}u(t)\,dt$ is the Laplace transform}, see \cite[Remark 4.8]{meertri}.  If the waiting times $W_n$ belong to the domain of attraction of a stable subordinator with Laplace exponent $f(\phi)=\phi^\beta$, then \eqref{CTRWgov} specializes to
\begin{align}\label{CTRWgovCaputo}
\partial_t^\beta p(x,t)=Ap(x,t),
\end{align}
where $\partial_t^\beta$ is the Caputo fractional derivative \cite[Eq.\ (2.16)]{FCbook}. Other choices of $f$ lead to distributed order \cite{ultraslow} and tempered \cite{Baeumer2009,Chakrabarty} fractional derivatives \rev{(see also Example \ref{exdo}).}

\rev{Let $u$ be a real-valued function on $[0, \infty)$.} Toaldo \cite[Eq.\ (2.18)]{toaldopota} introduced the operator
\begin{align}
\mathcal{D}^f u(t)\, = \,  b\frac{d}{dt}u(t) + \frac{d}{dt} \int_0^t u(s) \bar{\nu}(t-s)ds,
\label{28}
\end{align}
where $\bar{\nu}(s) = a+\nu(s, \infty)$ for a L\'evy triplet $\l a, b, \nu \r$, and where $s \mapsto \bar{\nu}(s)$ is assumed to be absolutely continuous on \btrevv{$[s, \infty)$ for any $s>0$} \rev{(a generalized Riemann-Liouville derivative)}.
The operator \eqref{28} can be regularized by subtracting an ``initial condition'', as in \cite[Remark 4.8]{meertri}, resulting in a \rev{generalization of the regularized Riemann-Liouville derivative}
\begin{align}
\mathfrak{D}_t^f u(t) \, = \,  b\frac{d}{dt}u(t) + \frac{d}{dt} \int_0^t u(s) \bar{\nu}(t-s)ds - \bar{\nu}(t)u(0).
\label{29}
\end{align}
Use \eqref{intpart} to compute the Laplace symbol of \eqref{29} as
\begin{align}\label{eq2.18mmm}
\mathcal{L} \left[ \mathfrak{D}_t^f u(t) \right] (\phi) \, = \,& b \phi \widetilde{u} (\phi) -b u(0)+\phi \mathcal{L} \left[ u \, * \bar{\nu} \right] (\phi)- \l \frac{f(\phi)}{\phi} -b \r u(0) \notag \\
= \, &b \phi \widetilde{u} (\phi) -b u(0)+ \phi \widetilde{u} (\phi)  \l \frac{f(\phi)}{\phi}-b \r - \l \frac{f(\phi)}{\phi} -b \r u(0) \notag \\
= \, & f(\phi) \widetilde{u}(\phi) - \phi^{-1}f(\phi) u(0) .
\end{align}
This shows that \eqref{gencap} and \eqref{29} are the same operator at least for \rev{exponentially bounded continuously differentiable functions $u$}: indeed the Laplace transforms agree and furthermore $t \mapsto \mathfrak{D}^f_t u(t)$ is a continuous function since by \cite[Proposition 2.7]{toaldopota} we can write
\begin{align}
\mathfrak{D}^f_t u(t) \, =  \, &b\frac{d}{dt}u(t) + \frac{d}{dt} \int_0^t u(s) \bar{\nu}(t-s)ds - \bar{\nu}(t)u(0)\\
= \, & b \frac{d}{dt}u(t) + \int_0^t u^\prime (s) \, \bar{\nu}(t-s) \, ds
\label{gencaputo}
\end{align}
and therefore $\mathfrak{D}^f_t u(t)$ is continuous, since $u^\prime$ and $\bar{\nu}$ are continuous, hence also $u^\prime * \bar{\nu}$. Hence \eqref{29} provides an explicit definition of the operator $\mathbb{C}_f \rev{ \l \partial_t \r}$ in \eqref{CTRWgov}. \rev{Observe that \eqref{gencaputo} is a generalization of the classical Dzerbayshan-Caputo derivative (according to \cite[Definition 2.4]{toaldopota}).}

A third approach was adopted in \citet{magda}: the authors pointed out that the distribution (one-dimensional marginal) of $B \l L^f(t) \r$, a special case of the CTRW scaling limit where $B$ is a Brownian motion, is the fundamental solution to the generalized diffusion equation
\begin{align}\label{MSapproach}
\frac{\partial}{\partial t} q(x, t) \, = \, \frac{1}{2} \Phi_t \frac{\partial^2}{\partial x^2} q(x, t), \qquad x \in \mathbb{R}, t>0,
\end{align}
where $\Phi_t$ is the integro-differential operator
\begin{align}
\Phi_tu(t) \, = \, \frac{d}{dt} \int_0^t u(s) M(t-s) ds,
\label{21schi}
\end{align}
for a kernel $M(t)$ such that
\begin{align}\label{Mdef}
\mathcal{L} \left[ M(t) \right] (\phi) \, = \, \frac{1}{f(\phi)}.
\end{align}
A special case of \eqref{MSapproach} called the fractional Fokker-Planck equation,  with $M(s)=s^{-\alpha}/\Gamma(1-\alpha)$ for some $0<\alpha<1$, was introduced by Metzler et al. \cite{MBK99} in the physics literature, see also Henry et al. \cite{HLS10PRL}. The extension to a general waiting time distribution, and hence a general time-convolution operator $\Phi_t$, was pioneered by Sokolov and J. Klafter \cite{SK06} in the context of statistical physics, see also Magdziarz \cite{Magd09b}, and in the mathematical literature by Magdziarz \cite{Magd09a}. \rev{The} form \eqref{MSapproach} of the CTRW limit equation is needed when one wants to add a source/sink term with the natural units of $x/t$, see Baeumer et al.\ \cite{forcing} for additional discussion.

In this work we place these different approaches in a unifying framework by appealing to the theory of special Bernstein functions. Let $f$ be the special Bernstein function \eqref{defbern} with conjugate \eqref{defbern*} where $a^\star$ and $b^\star$ are given by \eqref{asbs}.
Assume that $\nu(0, \infty) = \infty$, so that $b^\star=0$ in view of \eqref{asbs}. As in \eqref{intpart}, an integration by parts in \eqref{defbern*} yields
\begin{align}
\phi^{-1} f^\star (\phi) \, = \,b^\star + \int_0^\infty e^{-\phi s} \bar{\nu}^\star (s) ds,
\end{align}
which implies that
\begin{align}\label{eq223}
\mathcal{L} \left[ \bar{\nu}^\star (t) \right] (\phi) \, = \, \phi^{-1}f^\star (\phi) \, = \, \frac{1}{f(\phi)},
\end{align}
since $f^\star(\phi) = \phi/f(\phi)$. \rev{We} may write $\Phi_t u(t)=\frac{d}{dt}\Psi_t u(t)$, \rev{where}
\begin{align}
\Psi_t u(t) \, = \, \int_0^t u(s) M(t-s) ds ,
\label{21schimmm}
\end{align}
and ${\mathcal L}[\Psi_t u(t)]=f(\phi)^{-1}\widetilde u(\phi)$ for continuously differentiable functions $u$.
Hence \rev{by \eqref{Mdef} and \eqref{eq223} the operator} \eqref{21schi} is related to the conjugate Bernstein function $f^\star$, while \eqref{gencap} and \eqref{28} are related to the Bernstein function $f$.  In particular, \rev{if $b=0$, then} ${\mathcal L}[\mathcal{D}_{\btrev{t}}^f u(t)]=f(\phi)\widetilde u(\phi)$ and ${\mathcal L}[\Psi_tu(t)]=f(\phi)^{-1}\widetilde u(\phi)$, so that $\mathcal{D}^f \Psi_t u(t)=\Psi_t \mathcal{D}^f u(t)$ for sufficiently smooth functions $u$. \rev{This shows} that the operator \eqref{21schimmm} is \mmm{the inverse} of the operator \eqref{28} of Toaldo \cite{toaldopota}, when $b=0$ and $M(t)=\bar\nu^\star(t)$. Then \rev{$\Phi_t \mathcal{D}^f = \frac{d}{dt} \Psi_t \mathcal{D}^f = \frac{d}{dt}$ and} heuristically \eqref{MSapproach} can be seen as the result of applying $\Phi_t$ to both sides of \eqref{CTRWgov} with $A=\frac{1}{2} \frac{\partial^2}{\partial x^2}$. This will be made precise in Theorem \ref{tegensemi}. \rev{For sufficiently smooth functions $u$ we can use \eqref{eq2.18mmm} to say also that $\Psi_t \mathfrak{D}^fu(t) = u(t) + u(0)$.} Finally, note that we also have $M(t)=u_{\sigma^f}(t)$ in view of \eqref{213}.

Next we study the eigenstructure of the operator \eqref{29}, and a corresponding property for \eqref{28}, by considering \btrev{solutions $t\mapsto q (\lambda,t) \in  C^1((0, \infty), \mathbb{R})$, continuous at zero} and exponentially bounded, to the equations
\begin{align}
\begin{cases}
\frac{d}{dt} \int_0^t q (\lambda,s) \bar{\nu}(t-s) ds - \bar{\nu}(t) q(\lambda,0) \, = \, \lambda q(\lambda,t), \qquad t \, >\,  0, \\
q(\lambda,0) = 1,
\end{cases}
\label{problrel}
\end{align}
and
\begin{align}
\begin{cases}
\frac{d}{dt} q(\lambda,t) \, = \, \lambda \frac{d}{dt} \int_0^t q (\lambda,s) \, \bar{\nu}^\star (t-s) \, ds, \qquad t > 0, \\
q(\lambda,0) = 1,
\end{cases}
\label{problreladj}
\end{align}
where $\bar{\nu}(s) = a+\nu(s, \infty)$ and $\bar{\nu}^\star (s) = a^\star+\nu^\star(s, \infty)$.
Note that in view of the discussion above the operator on the right-hand side of \eqref{problreladj} coincides with the operator \eqref{21schi} studied in \cite{magda} if $\nu (0, \infty) =\infty$ and $M(t) = \bar{\nu}^\star (t)$.
\btrev{The prototype of our solutions is clearly the Mittag-Leffler function $E_\alpha(\lambda t^\alpha):= \sum_{k=0}^\infty (\lambda t^\alpha)^k/ \Gamma(\alpha k +1)$, $\alpha \in (0,1)$, which is the eigenfunction of the Caputo fractional derivative (e.g. \cite[p. 36]{FCbook}) and also solves \eqref{problreladj} when the operator on the right-hand side is the Riemann-Liouville fractional derivative of order $1-\alpha$ (e.g. \cite[p. 12]{mainardibook}). Furthermore it is well known that this function is continuous on $[0, \infty)$ and completely monotone (and hence $C^1((0, \infty), \mathbb{R})$) and it is not differentiable at zero (e.g. \cite[Section 3.1]{mainardimittag}).}

\MMMrev{In the next theorem, we impose the additional assumption that for some $\gamma\in  (0,2)$, $C>0$ and $r_0>0$ we have
\begin{equation}\label{OreyAssumption}
\int_0^r s^2\nu(ds)>Cr^{\gamma}\quad\text{for all $0<r<r_0$,}
\end{equation}
as in Orey \cite{orey}. For example, \eqref{OreyAssumption} holds with $\gamma=2-\alpha$ if $\nu(ds)>Cs^{-1-\alpha}ds$, for all $0<s<r_0$, which is true in all the examples discussed in this paper.  Note that Meerschaert and Scheffler \cite{meertri}, Toaldo \cite{toaldopota}, and Magdziarz and Schilling \cite{magda} all assume that $\nu(0,\infty)=\infty$, and \eqref{OreyAssumption} is not much stronger.  }

\begin{te}
\label{te11}
Let $f$ be a special Bernstein function having representation \eqref{defbern} with $b=0$ and $s \mapsto \bar{\nu}(s)$ absolutely continuous on $[s, \infty)$ for any $s>0$. Assume that \eqref{OreyAssumption} holds \frev{and that if $\gamma \in \left[ \frac{3}{2}, 2 \r $ the L\'evy measure $\nu$ has a bounded density on $[s,\infty)$ for any $s>0$}. Let $f^\star$ be the conjugate of $f$ having representation \eqref{defbern*}. Let $L^f(t)$ be the inverse \eqref{EtDef} of \rev{the subordinator} $\sigma^f(t)$ with Laplace exponent $f$. Then for any $\lambda \leq 0$ the $C^1(\btrev{(}0, \infty), \mathbb{R})$, \btrev{continuous at zero} and exponentially bounded solution to \eqref{problrel} is unique and equal to the moment generating function
\begin{align}\label{qDef}
q (\lambda, t) ={\mathbb E}[e^{\lambda L^f(t)}]
\end{align}
and furthermore:
\begin{enumerate}
\item \label{1relstar} The solution \eqref{qDef} to \eqref{problrel} is also the unique continuous and exponentially bounded solution to \eqref{problreladj};
\item \label{1rel} $ [0, \infty) \ni \theta \mapsto q (-\theta,t)$ is completely monotone for each fixed $t \geq 0$, and $q (0,t)=1$ for all $t\geq 0$;
\item \label{2rel} $t \mapsto q (\lambda, t)$ is completely monotone, for each fixed $\lambda \leq 0$, \emph{if and only if} $s \mapsto \bar{\nu}(s)$ is completely monotone;
\item \label{3rel} if $f(\phi)$ is regularly varying at $0+$ \mmm{with some index $\rho\in[0,1)$} then for all $\lambda<0$,
\begin{align}
q (\lambda, t) \sim  \frac{\bar{\nu}(t)}{\mmm{a}-\lambda} \quad\text{as $t\to \infty$,}
\label{behavte}
\end{align}
\mmm{both $t\mapsto q(\lambda,t)$ and $\bar{\nu}(t)$ vary regularly at infinity with index $-\rho$}, and
\begin{align}
\int_0^\infty  q (\lambda, t) dt = \infty \textrm{ for all } \lambda < 0.
\end{align}
\end{enumerate}
\end{te}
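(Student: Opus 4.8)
The plan is to base the entire argument on a single master identity: the time--Laplace transform of the moment generating function \eqref{qDef}. Since $L^f(t)$ is non-negative, $q(\lambda,t)={\mathbb E}[e^{\lambda L^f(t)}]\in(0,1]$ for $\lambda\le 0$, so $q$ is bounded (hence exponentially bounded), and it is continuous in $t$ because $t\mapsto L^f(t)$ is a.s.\ continuous and non-decreasing. Starting from the standard transform $\int_0^\infty e^{-\phi t}h(x,t)\,dt=\phi^{-1}f(\phi)e^{-xf(\phi)}$ of the density $h(x,t)$ of $L^f(t)$, integrating $e^{\lambda x}$ against it over $x\in(0,\infty)$ gives, for every $\lambda\le 0$,
\begin{align}\label{master}
\widetilde q(\lambda,\phi):=\int_0^\infty e^{-\phi t}q(\lambda,t)\,dt=\frac{f(\phi)}{\phi\l f(\phi)-\lambda\r}.
\end{align}
To see that \eqref{master} solves \eqref{problrel}, note that $b=0$ makes the left side of \eqref{problrel} equal to $\mathfrak D^f_t q$, whose transform is $f(\phi)\widetilde q-\phi^{-1}f(\phi)q(\lambda,0)$ by \eqref{eq2.18mmm}; setting this equal to $\lambda\widetilde q$ and using $q(\lambda,0)=1$ reproduces exactly \eqref{master}. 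Since $\mathfrak D^f_t q$ is continuous via the representation \eqref{29}, the equality of transforms upgrades to a pointwise identity, so $q$ is genuinely a solution.

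Uniqueness among continuous, exponentially bounded solutions is then a Laplace--uniqueness statement: the difference $w$ of two such solutions of \eqref{problrel} satisfies the homogeneous equation with $w(0)=0$, whose transform reads $\l f(\phi)-\lambda\r\widetilde w(\phi)=0$; as $f(\phi)-\lambda>0$ on $(0,\infty)$ we get $\widetilde w\equiv 0$ and hence $w\equiv 0$. For claim (1) I would transform \eqref{problreladj} in the same way: its left side transforms to $\phi\widetilde q-1$ and, using $\mathcal L[\bar\nu^\star](\phi)=1/f(\phi)$ from \eqref{eq223}, its right side transforms to $\lambda\phi\widetilde q/f(\phi)$; solving the resulting algebraic relation yields once more \eqref{master}. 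Thus \eqref{problrel} and \eqref{problreladj} have the same transform and therefore the same unique solution.

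Claim (2) is immediate from \eqref{qDef}: for fixed $t$, $\theta\mapsto {\mathbb E}[e^{-\theta L^f(t)}]$ is the Laplace transform of the law of the non-negative variable $L^f(t)$, hence completely monotone, and $q(0,t)=1$. For claim (3) I would use the correspondence ``$g$ completely monotone $\iff$ $\widetilde g$ Stieltjes'' together with the characterization recalled before the theorem: $\bar\nu$ is completely monotone iff $f$ is complete iff $S(\phi):=\phi^{-1}f(\phi)$ is Stieltjes. Writing \eqref{master} as $\widetilde q=S/\l \phi S-\lambda\r$, its reciprocal is
\begin{align}\label{recip}
1/\widetilde q(\lambda,\phi)=\phi-\lambda/S(\phi).
\end{align}
If $\bar\nu$ is completely monotone then $S$ is Stieltjes, so $1/S$ is complete Bernstein, and \eqref{recip} exhibits $1/\widetilde q$ as a sum of two complete Bernstein functions; hence $\widetilde q$ is Stieltjes and $q(\lambda,\cdot)$ is completely monotone. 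Conversely, if $q(\lambda,\cdot)$ is completely monotone then $1/\widetilde q$ is complete Bernstein; since $\nu(0,\infty)=\infty$ forces $f(\phi)\to\infty$, the drift of $1/\widetilde q$ is $\lim_{\phi\to\infty}\phi^{-1}/\widetilde q=1$, so removing it shows $-\lambda/S=1/\widetilde q-\phi$ is complete Bernstein, whence $1/S$ and finally $S$ are Stieltjes, i.e.\ $\bar\nu$ is completely monotone.

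For claim (4) I would pass to $\phi\to 0+$ in \eqref{master} and invoke Karamata's Tauberian theorem. If $f$ varies regularly at $0+$ with index $\rho\in[0,1)$ then $f(\phi)\to a$, and for $\lambda<0$,
\begin{align}\label{Tauber}
\widetilde q(\lambda,\phi)\sim\frac{1}{a-\lambda}\,\frac{f(\phi)}{\phi}=\frac{1}{a-\lambda}\,\mathcal L[\bar\nu](\phi),\qquad\phi\to 0+,
\end{align}
which is regularly varying at $0+$ with index $\rho-1$. Because both $\bar\nu$ and $t\mapsto q(\lambda,t)$ are monotone (the latter since $L^f(t)$ is non-decreasing and $\lambda<0$), the monotone--density form of Karamata's theorem turns \eqref{Tauber} into $q(\lambda,t)\sim \bar\nu(t)/(a-\lambda)$, with both functions regularly varying at $\infty$ with index $-\rho$; divergence of $\int_0^\infty q(\lambda,t)\,dt=\widetilde q(\lambda,0+)$ then follows because an index $-\rho>-1$ tail is non-integrable. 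The step I expect to be the main obstacle is the careful bookkeeping here: verifying that the Tauberian index $1-\rho$ stays in the admissible range $(0,1]$ and handling the boundary case $\rho=0$ (where $a$ may be positive and $\bar\nu(t)\to a$), so that the monotone--density theorem and the constant $1/(a-\lambda)$ apply uniformly; by contrast, the identity \eqref{master} and the Stieltjes/complete--Bernstein duality used for the earlier parts are essentially algebraic once the correspondence recalled in the text is in hand.
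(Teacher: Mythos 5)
Your proposal is correct, and for the main statement, Item (1), Item (2), and Item (4) it follows essentially the same path as the paper: derive the master transform $\widetilde q(\lambda,\phi)=f(\phi)/[\phi(f(\phi)-\lambda)]$ from the density of $L^f(t)$, match it against the transforms of \eqref{problrel} and \eqref{problreladj}, invoke uniqueness of Laplace transforms for continuous exponentially bounded functions, and finish Item (4) with the Karamata Tauberian theorem plus the monotone density theorem applied to both $q(\lambda,\cdot)$ and $\bar\nu$ (whose transform is exactly $f(\phi)/\phi$). Where you genuinely diverge is Item (3). The paper proves the direct half by noting that $\varphi(z)=z/(z-\lambda)$ is complete Bernstein, so $\varphi\circ f$ is complete Bernstein, and then integrates by parts in its L\'evy--Khintchine representation; its converse runs through the chain $G$ Stieltjes $\Rightarrow 1/G$ complete Bernstein $\Rightarrow \phi G$ complete Bernstein $\Rightarrow 1-\lambda/f$ Stieltjes $\Rightarrow f$ complete Bernstein. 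You instead exploit the single algebraic identity $1/\widetilde q=\phi-\lambda/S$ with $S=\phi^{-1}f$, and run the Stieltjes/complete-Bernstein duality of \cite[Thm 7.3]{librobern} symmetrically in both directions; this is cleaner and makes the two implications visibly mirror images of each other, at the cost of leaning entirely on the duality theorem rather than on the composition stability of complete Bernstein functions. One small point you gloss over: the equivalence ``$g$ completely monotone $\iff \widetilde g$ Stieltjes'' requires the constant term $b$ in the Stieltjes representation \eqref{hDef} of $\widetilde q$ to vanish (a nonzero $b$ corresponds to an atom at $t=0$, not to a completely monotone function). This is exactly the $d=0$ issue the paper treats explicitly; in your setup it follows in one line from $\widetilde q(\lambda,\phi)\le 1/\phi\to 0$ as $\phi\to\infty$, so the gap is cosmetic, but it should be stated. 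Your observation that $t\mapsto q(\lambda,t)$ is monotone because $L^f$ is non-decreasing is also worth keeping, since it is the hypothesis that licenses the monotone density theorem in Item (4).
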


\begin{proof}
\mmm{First we prove that \eqref{qDef} solves \eqref{problrel}.  Since $\nu(0,\infty)=\infty$}, \cite[Thm 3.1]{meertri} implies that $L^f(t)$ has a Lebesgue density $x \mapsto l(x, t)$.  Now \cite[Eq. (3.13)]{meertri} shows that
\begin{align}
\mathcal{L} \left[ l(x, \cdot) \right] (\phi) \, = \,
&\mathcal{L} \left[ \frac{\partial}{\partial x} P \ll \sigma(x) \rev{\geq} \cdot \rr \right] (\phi) \,= \, \frac{f(\phi)}{\phi} e^{-xf(\phi)}
\label{laplivt}
\end{align}
and therefore, for $\theta >0+$, we have \cite[Corollary 3.5]{meertri}
\begin{align}
\widetilde{\widetilde{l}}\l \theta, \phi \r \, = \,  \mathcal{L} \left[ \mathcal{L} \left[ l(x, t) \right] (\phi) \right] (\theta) \, = \, \frac{f(\phi)}{\phi} \frac{1}{\theta + f(\phi)}.
\label{doulapl}
\end{align}
\btrev{By \eqref{eq2.18mmm}, with $b=0$, we have that}
\begin{align}\label{eq2.36}
\mathcal{L} \left[  \frac{d}{dt} \int_0^t q(\lambda, s) \bar{\nu}(t-s) ds - \bar{\nu}(t)  \right] (\phi) \, = \, f(\phi) \widetilde{q}(\lambda, \phi) -\frac{f(\phi)}{\phi} .
\end{align}
Taking Laplace transforms in \eqref{problrel} and solving for  $\widetilde{q}(\lambda, \phi)$ then yields
\begin{align}\label{eq233}
\mathcal{L} \left[ q(\lambda, \cdot) \right](\phi) \, = \, \widetilde{q}(\lambda, \phi) \, = \, \frac{f(\phi)}{\phi} \frac{1}{f(\phi)-\lambda}.
\end{align}
Comparing \eqref{doulapl} to \eqref{eq233} shows that the moment generating function of $L^f$ is
\begin{align}
\int_0^\infty e^{\lambda x} l(x,t)\,dx=q(\lambda, t) \, = \, \mathds{E}e^{\lambda L^f (t)}, \qquad \lambda \leq 0.
\end{align}\label{qMGF}

Now we prove that $t \mapsto \mathds{E}[e^{\lambda L^f(t)}] \in  C^1((0, \infty), \mathbb{R})$. Since \eqref{OreyAssumption} holds, it follows from Orey \cite{orey} that $t \mapsto P \l \sigma^f(x) \leq t \r$ has derivatives of all orders. Hence we have that
\begin{align}
\mathds{E}[e^{\lambda L^f(t)}] \, = \, &\int_0^\infty e^{\lambda x} \l - \frac{\partial}{\partial x} P \l \sigma^f(x) \leq t \r \r dx \notag \\
= \, & 1+\lambda \int_0^\infty e^{\lambda x} P \l \sigma^f(x) \leq t \r dx \notag \\
 = \, & \btrev{ 1+\lambda \int_0^t \int_0^\infty e^{\lambda x} \mu(s, x) dx ds,}
\label{237}
\end{align}
where $t \mapsto \mu(t, x)$ is the probability density of $\sigma^f(x)$ for any $x > 0$. Now it suffices to show that the function
\begin{align}
I(s):= \int_0^\infty e^{\lambda x} \mu(s, x) dx
\end{align}
is continuous. Since we assume \eqref{OreyAssumption} we have that the density $\mu(s,x)$ can be represented via the inversion formula
\begin{align}
\mu(s,x) \, = \, (2\pi)^{-1}\int_{\mathbb{R}} e^{-i\xi s} e^{-x\varphi(\xi)} d\xi
\end{align}
where $\varphi(\xi) = f(-i\xi)$ is the characteristic exponent of the L\'evy process $\sigma^f$ and further $|e^{-x \varphi (\xi)}| \leq e^{-Cx/4|\xi|^{2-\gamma}}$ for sufficiently large $|\xi|$ (see Orey \cite{orey} at the beginning of page 937). Hence we have by the dominated convergence theorem that $\mu(s,x)$ is continuous on $(s,x) \in (0,\infty) \times (0, \infty)$. \lrev{What is more, it is bounded on $(s,x) \in (0, \infty) \times (\delta, \infty)$ for every $\delta >0$. Therefore if $s_n \to s_0>0$, as $n \to \infty$, we have by the dominated convergence theorem that
\begin{align}
\int_\delta^\infty e^{\lambda x} \mu(s_n,x) \, dx \, \to \, \int_\delta^\infty e^{\lambda x} \mu(s_0,x) \, dx \text{ as } n \to \infty.
\label{convd}
\end{align}
Also, by \eqref{213}, we have for all $s \geq s^\prime>0$,
\begin{align}
\int_0^\infty \mu(s, x) dx \, = \, \bar{\nu}^\star(s) \leq \bar{\nu}^\star(s^\prime) < \infty.
\label{2}
\end{align}
Thus, given $\epsilon>0$ there is $\delta>0$ such that
\begin{align}
\int_0^\delta \mu(s_0, x) dx < \frac{\epsilon}{2}.
\label{3}
\end{align} 
We now show a similar inequality, for $\lambda <0$,
\begin{align}
\limsup_{n \to \infty} \int_0^\delta \mu(s_n,x) dx \, \leq \,& \limsup_{n \to \infty} e^{-\lambda \delta} \int_0^\delta e^{\lambda x} \mu(s_n,x) \, dx \notag \\
\leq \, & \limsup_{n \to \infty} e^{-\lambda \delta} \l \int_0^\infty \mu(s_n,x) dx - \int_\delta^\infty e^{\lambda x} \mu(s_n,x) dx \r \notag \\
 \, = & e^{-\lambda \delta} \l \int_0^\infty \mu(s_0,x) dx - \int_\delta^\infty e^{\lambda x} \mu(s_0,x) dx  \r \notag \\
= \,&  e^{-\lambda \delta} \l \int_0^\infty \l 1-e^{\lambda x} \r \mu(s_0,x) \, dx +\int_0^\delta e^{\lambda x} \mu(s_0,x) \, dx \r,
\label{step}
\end{align}
where in the first equality above we used \eqref{convd} and the continuity of  the function $u_{\sigma^f}(s):=\int_0^\infty \mu(s,x)dx$ which follows, with our assumptions on the L\'evy measure, from \cite[Theorem 5.2]{kallenberg_ren} (see also the comments following that result).
Now we take $\lambda \uparrow 0$ (use \eqref{2} to justify the dominated convergence theorem) and we get that
\begin{align}
\limsup_{n \to \infty} \int_0^\delta \mu(s_n,x) \,  dx \, \leq 	\, \int_0^\delta \mu(s_0,x) \, dx < \frac{\epsilon}{2}.
\label{4}
\end{align}
Finally the continuity follows since using \eqref{convd}, \eqref{3} and \eqref{4}, we obtain
\begin{align}
\limsup_{n \to \infty} \left| \int_0^\infty e^{\lambda x} \mu(s_n,x) \, dx \, - \, \int_0^\infty e^{\lambda x} \mu(s_0,x) dx \right| \, \leq \, \epsilon.
\end{align}}
Hence \eqref{237} is continuous on $[s_0, \infty)$ for all $s_0>0$, and therefore $t \mapsto \mathds{E}e^{\lambda L^f(t)}$ is \mm{an element of} $C^1((0, \infty), \mathbb{R})$.

Now we can write
\begin{align}
\frac{d}{dt} \btrev{\mathds{E}} e^{\lambda L^f(t)}  \, = \, \lambda \int_0^\infty e^{\lambda x} \mu(t, x) dx .
\label{provec1}
\end{align}
Then it follows from the uniqueness of the Laplace transform (e.g., see Feller \cite[Theorem 1, p. 430]{Feller}) that \eqref{qDef} is the unique $C^1(\btrev{(}0, \infty), \mathbb{R})$ and exponentially bounded solution to the problem \eqref{problrel}, which proves the first part of the theorem.

Next we prove Item (\ref{1relstar}).
\rev{For a $C^1$ and exponentially bounded solution, by \cite[Corollary 1.6.6]{abhn}, we can} take Laplace transforms in \eqref{problreladj} to get
\begin{align}
\phi \widetilde{q} (\lambda, \phi) -1 \, = \, & \lambda \phi \mathcal{L} \left[ q \, * \, \bar{\nu}^\star \right](\phi) \notag \\
= \, & \lambda \phi  \l \phi^{-1} f^\star (\phi) -b^\star \r \widetilde{q}(\lambda, \phi).
\label{adjlapl}
\end{align}
Since $f^\star (\phi) = \phi / f(\phi)$, and $b^\star = 0$ in view of \eqref{asbs}, \eqref{adjlapl} can be rewritten
\begin{align}
\widetilde{u} (\lambda, \phi) \, = \, \frac{f(\phi)/\phi}{f(\phi) - \lambda}
\label{adjlapl2}
\end{align}
which coincides with \eqref{eq233}. This proves that \eqref{qDef} is also the unique \rev{$C^1$ and exponentially bounded} solution to \eqref{problreladj}, since they have the same Laplace transform.

\mmm{Next we prove Item (\ref{1rel}). Since the assumptions imply that $\nu(0,\infty)=\infty$, the subordinator $\sigma^f(t)$ is strictly increasing \cite[Theorem \rev{21.3}]{satolevy}, and hence $ L^f(0)=0$ a.s.}  Then we also have $q (\lambda, 0) \, = \, \mathds{E} e^{\lambda L^f(0)} = 1$. \mmm{Since  $\theta \mapsto q (-\theta,t)$ is the Laplace transform of $x\mapsto l(x,t)$, it is completely monotone for each fixed $t \geq 0$.}

Next we prove Item (\ref{2rel}).  If the function $s \mapsto \bar{\nu}(s)$ is completely monotone, we have that for some measure \rev{$m(\cdot)$ on $(0, \infty)$} and some non-negative constant $a$
\begin{align}\label{eq2.41}
\bar{\nu}(s) \, = \, a+ \int_0^\infty e^{-sw} m(dw) \, = \,a+ \int_s^\infty \int_0^\infty w\,e^{-yw} \, m(dw) \, dy
\end{align}
and therefore the function
\begin{align}
y \mapsto v (y) = \int_0^\infty e^{-yw} \, w\,m(dw)
\end{align}
is the completely monotone density of the L\'evy measure $\nu(dy)$. This implies that $f$ is a complete Bernstein function.
Now \cite[Thm 6.2 (vi)]{librobern} implies that
\begin{align}
\varphi (z) \, = \, \frac{z}{z-\lambda}
\end{align}
is a complete Bernstein function for $\lambda \leq 0$, and therefore $\varphi \circ f$ is a complete Bernstein function in view of \cite[Corollary 7.9]{librobern}. Therefore we have for some measure \rev{$k(\cdot)$ on $(0, \infty)$} that
\begin{align}
\varphi \circ f (\phi) = c+d\phi+ \int_0^\infty \l 1-e^{-\phi t} \r \int_0^\infty e^{-ts }k(ds) \, dt
\label{rev}
\end{align}
and therefore, integrating by parts in \eqref{rev}, one has
\begin{align}
\frac{1}{\phi} \l \varphi \circ f \r = \, & \int_0^\infty e^{-\phi t} \l c+d\phi+ \int_t^\infty \int_0^\infty e^{-ws} \, k(ds) \, dw \, \r dt \notag \\
= \, & \int_0^\infty e^{-\phi t} \l c+d\phi+ \int_0^\infty s^{-1}e^{-st} k(ds) \r \, dt \notag \\
 = \, & \rev{d + \int_0^\infty e^{-\phi t}  \l c+\int_0^\infty s^{-1} e^{-st} k(ds) \r dt. }
\label{final}
\end{align}
\rev{The constant $d$ in \eqref{final} is equal to zero. This can be ascertained by observing that}
\[\frac 1\phi \l \varphi\circ f\r=\frac 1\phi \frac{f(\phi)}{f(\phi)-\lambda}\to d\]
as $\phi\to\infty$ by \cite[p.\ 23, Item (iv)]{librobern} and that $f\geq 0$, $f'\geq 0$, and $-\lambda\geq 0$.  Then \btrev{since
\begin{align}
\left| \frac 1\phi \frac{f(\phi)}{f(\phi)-\lambda} \right| \leq \frac{1}{\phi} \to 0
\end{align}
as $\phi \to \infty$,} it follows that $d=0$.
Since by \eqref{eq233} we also have
\begin{align}
\frac{1}{\phi} \l \varphi \circ f \r \, = \, \int_0^\infty e^{-\phi t} q(\lambda, t)  dt ,
\end{align}
and since $t\mapsto c+ \int_t^\infty \int_0^\infty e^{-ws} \, k(ds) \, dw$ is obviously continuous, it follows from the uniqueness theorem for Laplace transforms that
\begin{align}\label{259}
q(\lambda, t) \, = \,  c+ \int_0^\infty s^{-1}e^{-st} k(ds).
\end{align}
This proves that $t \mapsto q (\lambda, t)$ is completely monotone, which establishes the direct half of Item (\ref{2rel}).

\rev{Now we prove the converse implication. By assumption we have
\begin{align}
q(\lambda, t) \, = \, c + \int_0^\infty e^{-tu}\mu(du),
\end{align}
for $c \geq 0$ and a measure $\mu(\cdot)$ on $(0, \infty)$.}
 In view of \eqref{eq233} we have
\begin{align}
\frac{1}{\phi} \frac{f(\phi)}{f(\phi)-\lambda} \, = \,  \int_0^\infty e^{-\phi t}q (\lambda, t) \, dt
\end{align}
and hence
\begin{align}
G(\phi):=\frac{1}{\phi} \frac{f(\phi)}{f(\phi)-\lambda} \, = \, & \int_0^\infty e^{-\phi t} \l c+ \int_0^\infty e^{-ts} \rev{\mu(ds)} \r dt \notag \\
= \, & \frac{c}{\phi}+ \int_0^\infty \frac{1}{\phi +s}\, \rev{\mu(ds)},
\label{convst}
\end{align}
which is a Stieltjes function provided that $\int_0^\infty (1+s)^{-1}\, \rev{\mu(ds)}<\infty$.
But such an integral converges since by \rev{Item (\ref{1relstar}), continuity of $t\mapsto q(\lambda, t)$} and the fact that $q(\lambda, 0) = 1$, it must be true that
\begin{align}
1 \, = \, q(\lambda, 0) \, = \, c+\int_0^\infty \rev{\mu(ds)}
\label{258}
\end{align}
and therefore $\rev{\mu(ds)}$ is integrable. Now note that $F(\phi)=1/G(\phi)$ is a complete Bernstein function by \cite[Thm 7.3]{librobern}. Then $\phi/F(\phi)=\phi G(\phi)$ is also complete by \cite[Proposition 7.1]{librobern}.  It follows that
\[\frac 1{\phi G(\phi)}=\frac{f(\phi)-\lambda}{f(\phi)} \, = \, 1-\frac{\lambda}{f(\phi)}\]
is a Stieltjes function in view of \cite[Thm 7.3]{librobern}. Let $g(\phi) \, := \, 1-\lambda/f(\phi)$ and use \eqref{hDef} to write
\begin{align}
g(\phi) \, = \,\frac{\mathfrak{a}}{\phi}+ \mathfrak{b}+ \int_0^\infty \frac{1}{\phi +s}\, \mathfrak{k}(ds),
\end{align}
then
\begin{align}
-\lambda/f(\phi) = g(\phi) -1\, = \, \frac{\mathfrak{a}}{\phi}+ \mathfrak{b} -1+ \int_0^\infty \frac{1}{\phi +s}\, \mathfrak{k}(ds)
\label{267}
\end{align}
and since we know that $-\lambda /f(\phi)$ is non-negative (recall that $f(\phi) \geq 0$ and $\lambda\leq 0$) also \eqref{267} must be non-negative for all $\phi \in (0, \infty)$. In particular by letting $\phi \to \infty$ we deduce that $\mathfrak{b}\geq 1$. We have thus proved that $-\lambda/f(\phi)$ is a Stieltjes function. Therefore by applying again \cite[Thm 7.3]{librobern} to the Stieltjes function $-\lambda /f$ we deduce that $f(\phi)$ is a complete Bernstein function and therefore
\begin{align}
f(\phi) \, = \, & a+\int_0^\infty \l 1-e^{-\phi s} \r \nu(s)ds \textrm{ with } \nu(s) \, = \,  \int_0^\infty e^{-st} \mathfrak{m}(dt),
\label{this}
\end{align}
for some measure $\mathfrak{m}$ and some $a\geq 0$ \MMM{(since we are assuming $b=0$)}. From \eqref{this} we get that
\begin{align}
\bar{\nu}(s) \, = \, a+\int_s^\infty \nu(w) dw \, = \, a+\int_0^\infty e^{-st} \frac{\mathfrak{m}(dt)}{t},
\end{align}
and this proves that $s \mapsto \bar{\nu}(s)$ is completely monotone, which establishes the converse part of Item (\ref{2rel}).

Finally we prove Item (\ref{3rel}).  We say that a Borel measurable function $f:(0,\infty)\mapsto[0,\infty)$ varies regularly at infinity with index $\rho\in{\mathbb R}$ if
\begin{align}
\lim_{x \to \infty} \frac{f(cx)}{f(x)} \, = \, c^\rho,
\end{align}
for any $c>0$, see for example Bingham et al. \cite[p.\ 1]{bingam}.  It follows that, for any $\varepsilon>0$, for some $x_0>0$, we have \cite[Lemma VIII.8.2]{Feller}
\begin{equation}\label{Zygmund}
x^{\rho-\varepsilon}<f(x)<x^{\rho+\varepsilon}\quad\text{for all $x\geq x_0$.}
\end{equation}
If $\rho=0$, we say that $f$ is slowly varying.  If $f(1/x)$ is regularly varying at infinity with index $-\rho$, then we say that $f$ is regularly varying at zero with index $\rho$.  Suppose that $U(x)$ is a nondecreasing right-continuous function on $[0,\infty)$ with Laplace transform
\[\widetilde U(s)=\int_0^\infty e^{-s x}U(dx)\]
for all $s>0$.  The Karamata Tauberian Theorem \cite[Thm \rev{XIII.5.2}]{Feller} states that
\begin{equation}\label{Karamata}
U(x)\sim \frac{x^\rho L(x)}{\Gamma(1+\rho)}\quad\text{as $x\to\infty$} \quad\Longleftrightarrow \quad\widetilde U(s)\sim s^{-\rho}L(1/s)\quad\text{as $s\to 0$,}
\end{equation}
where $L(x)$ is slowly varying at infinity and $\rho\geq 0$.

Suppose that $f(\phi)$ varies regularly at $\rev{\phi}=0$ with index $\rho=1-\beta$ for some  $\beta \in (0,1]$.  Note that if $f$ varies regularly at zero, we must have $\rho=1-\beta$ for some $\beta \in [0,1]$ due to the L\'evy-Khintchine representation \eqref{defbern} \cite[Proposition 1.5]{bertoins}.  If \mmm{$\rho>0$}, it follows from \eqref{Zygmund} that we must have $f(0+)=0$, and hence $a=0$ in \eqref{defbern}.  \mmm{If $\rho=0$, then $a>0$ is possible, in which case $f(0+)=a$.  In either case,} from \eqref{eq233} we have
\[
\widetilde{q}(\lambda, \phi) \, = \, \frac{f(\phi)}{\phi} \frac{1}{f(\phi)-\lambda}\sim \frac{f(\phi)}{\phi} \mmm{\frac{1}{a-\lambda}}\quad\text{as $\phi\to 0+$,}
\]
where $\lambda<0$, and then it is easy to check that $\phi\mapsto\widetilde{q}(\lambda, \phi)$ varies regularly at $\phi=0+$ with index $-\beta$.  Define
\[Q(\lambda,t)=\int_0^t q(\lambda,s)\,ds\]
so that
\[\widetilde{q}(\lambda, \phi) =\int_0^\infty e^{-\phi t}Q(\lambda,dt) .\]
Apply the Karamata Tauberian Theorem to see that $t\mapsto Q(\lambda,t)$ varies regularly at infinity with index $\beta$, and furthermore that
\[Q(\lambda,t)\sim \frac {tf(1/t)}{\Gamma(1+\beta)(\mmm{a}-\lambda)}\quad\text{as $t\to\infty$.}\]
Now apply the Monotone Density Theorem \cite[Thm 1.7.2]{bingam} to see that $t\mapsto q(\lambda,t) $ varies regulary at infinity with index $\beta-1$, and furthermore $tq(\lambda,t)/Q(\lambda,t)\to\beta$ as $t\to\infty$, so that
\begin{equation}\label{qasy}
q(\lambda,t)\sim \frac {\beta f(1/t)}{\Gamma(1+\beta)(\mmm{a}-\lambda)}\quad\text{as $t\to\infty$.}
\end{equation}

Next observe that \eqref{intpart},  along with the fact that $b=0$, implies that $f(\phi)/\phi$ is the Laplace transform of $\bar\nu(t)$.  Then another application of the Karamata Tauberian Theorem shows that
\begin{equation}\label{barnuasy}
\bar\nu(t)\sim \frac {\beta f(1/t)}{\Gamma(1+\beta)}\quad\text{as $t\to\infty$.}
\end{equation}
Combining \eqref{qasy} and  \eqref{barnuasy} shows that $(\mmm{a}-\lambda) q(\lambda,t)\sim \bar\nu(t)$ as $t\to\infty$, which proves the first statement of Item (\ref{3rel}).

Finally, since $t\mapsto Q(\lambda,t)$ varies regularly at infinity with index $\beta>0$, it follows from \eqref{Zygmund} that $Q(\lambda,t)\to\infty$ as $t\to\infty$, which proves the second statement of Item (\ref{3rel}).
\end{proof}

\begin{os} \normalfont
\label{rem22}
If $s \mapsto \bar{\nu}(s)$ is completely monotone, then we showed in the proof above that
\begin{align*}
\bar{\nu}(s) \, = \, a+ \int_0^\infty e^{-sw} m(dw) \, = \, a+ \int_s^\infty \int_0^\infty w^{-1}e^{-yw} \, m(dw) \, dy
\end{align*}
and therefore the L\'evy density of $\nu$ is also completely monotone. The corresponding Bernstein function $f$ is thus a complete Bernstein function. Therefore the adjoint $f^\star$ is also complete (Proposition 7.1 in \cite{librobern}) and has a L\'evy density which is completely monotone with tail
\begin{align*}
\bar{\nu}^\star (s) \, = \,& a^\star+\int_s^\infty \int_0^\infty e^{-tw} \rev{m^\star(dw)} \, dt \notag \\
= \, &a^\star+\int_0^\infty w^{-1}e^{-sw} \, \rev{m^\star(dw)},
\end{align*}
for some measure $m$. Therefore $s \mapsto \bar{\nu}^\star(s)$ is a completely monotone function and Item (\ref{2rel}) of Theorem \ref{te11} may be restated as: the function $t \mapsto q(\lambda, t)$ is completely monotone if and only if $s \mapsto \bar{\nu}^\star (s)$ is completely monotone.  That is, $\bar \nu$ is completely monotone if and only if $\bar{\nu}^\star$ is completely monotone.
\end{os}

\begin{os} \normalfont
\mmm{It follows from \cite[Theorem 3.1]{meertri} that the inverse subordinator \eqref{EtDef} has a probability density
\[l(x,t)=\int_0^t \bar{\nu}(t-s) \mu(ds, x)\]
for any $t>0$, where $\mu(ds, x)$ is the probability distribution of the subordinator $\sigma^f(x)$ with Laplace symbol \eqref{defbern}, and $\bar{\nu}(s) = a+\nu(s, \infty)$.  It follows that we can also write
\begin{align}
q(\lambda, t) \, = \, \int_0^t \int_0^\infty e^{\lambda x} \bar{\nu}(t-s) \mu(ds, x) \,  dx
\end{align}
in view of \eqref{qDef}.}
\end{os}

Generalized relaxation equations and patterns have been also examined in \cite{kochudo, kochuphys, kochu, weron, weron12}. Kochubei \cite{kochu} considered operators similar to that appearing in \eqref{problrel} but with different kernels of convolution. By making assumptions on the Laplace transform of such kernels he determined sufficient conditions for the complete monotonicity of the solution.  In \cite{kochudo, kochuphys} he also studied distributed-order relaxation patterns, i.e., the solution to
\begin{align}
\int_0^1 \frac{\partial^\alpha}{\partial t^\alpha} u \, \mu(\alpha)d\alpha \, = \, \lambda u, \qquad \lambda <0,
\label{dok}
\end{align}
where $\mu$ is a non-negative continuous function on $[0,1]$. He pointed out that in this case the relaxation pattern is completely monotone. Observe that \eqref{dok} is a particular case of \eqref{problreladj} (see \cite{toaldodo} for details on this point).  \mmm{An important application of \eqref{dok} is to ultraslow relaxation where $f(\phi)$ is slowly varying at $\phi=0$, see \cite{ultraslow} for more details.}

\begin{os} \normalfont
\mmm{The proof of \cite[Theorem 3.9]{ultraslow} provides a partial converse of Item (\ref{3rel}) in Theorem \ref{te11} in the case of ultraslow diffusion.  If the tail of the L\'evy measure is of the form
\[\bar\nu(t)=\int_0^1 t^{-\eta}p(\eta)d\eta,\]
where $p$ varies regularly at zero with some index $\alpha>-1$, then $\bar\nu(t)$ is slowly varying \cite[Lemma 3.1]{ultraslow}, and then it follows that the Laplace symbol
\[\phi \mapsto f(\phi)=\int_0^1 \Gamma(1-\eta)\, \rev{\phi^\eta} \, p(\eta)d\eta\]
is also slowly varying \cite[Eq.\ (3.18)]{ultraslow}. }
\end{os}

\subsection{Time-changed processes}
Theorem \ref{te11} and the discussion above suggests how the approaches of Meerschaert and Scheffler \cite{meertri}, Toaldo \cite{toaldopota}, and Magdziarz and Schilling \cite{magda} may be rearranged under a unifying framework, by resorting to special Bernstein functions. Now we extend the equations \eqref{problrel} and \eqref{problreladj} to a more general form.  Suppose that $A$ is a self-adjoint, dissipative operator that generates a $C_0$-semigroup of operators $T_t$ on the \btrev{(complex)} Hilbert space $\l \mathfrak{H}, \langle \cdot, \cdot \rangle \r$, and consider the generalized abstract Cauchy problem
\begin{align}
\frac{d}{dt} \int_0^t g(s) \, \bar{\nu}(t-s) ds \, - \bar{\nu}(t) g(0)  \, = \, A g(t), \qquad \btrev{t > 0},
\label{general}
\end{align}
or equivalently (as we will show in Theorem \ref{tegensemi})
\begin{align}
\frac{d}{dt}  g(t) \,  = \, \frac{d}{dt} \int_0^t A g( s) \, \bar{\nu}^\star(t-s) \, ds, \qquad \btrev{t>0}.
\label{generaladj}
\end{align}
\rev{Observe that if $A = \lambda \leq 0$ (then $g: [0, \infty) \mapsto \mathbb{R}$) the equations \eqref{general} and \eqref{generaladj} reduce to that studied in Theorem \ref{te11}. In Theorem \ref{tegensemi} below we will investigate solutions to \eqref{general} and \eqref{generaladj}, i.e., functions of the form $g:[0, \infty) \mapsto \mathfrak{H}$ with $g \in  C^1\l \btrev{(}0, \infty), \mathfrak{H} \r $, \btrev{$g(t)$ continuous at zero}, $g(t) \in \text{Dom}(A)$ for any $t \geq 0$ and such that \eqref{general} and \eqref{generaladj} are true.}
If $A=\frac 12 \frac{\partial^2}{\partial x^2}$ then \eqref{generaladj} with $g(t)=q(x,t)$ reduces to the equation \eqref{MSapproach} in Magdziarz and Schilling \cite{magda}.

\MMM{We follow Kolokoltsov \cite[Section 1.9]{kolokoltsov} and Schilling et al.\ \cite[Chapter 12]{librobern} for the basic theory of semigroups and generators.  See Jacob \cite[Chapter 2]{jacob1} or \cite[Chapter 11]{librobern} for a nice summary of the classical theory of linear self-adjoint operators on Hilbert spaces.  By the definition of a $C_0$-semigroup we have for all $u \in \mathfrak{H}$ that}
\begin{enumerate}
\item $T_0 u = u$
\item $ T_t T_s u = T_{t+s}u  $
\item $ \lim_{t \to 0} \left\| T_t u - u \right\|_{\mathfrak{H}} = 0 $.
\end{enumerate}
Note that since $A$ is a self-adjoint generator and it is dissipative we have that the spectrum is non-positive, i.e., for any $u \in \textrm{Dom}(A)$ we have $\langle Au,u \rangle \leq 0$ \cite[Proposition 11.2 and formula (11.4)]{librobern}, and we can apply the spectral theorem \cite[Thm 11.4]{librobern}. Therefore we know that there exists an orthogonal projection-valued measure
\begin{align}
E (B) : = \int_B E(d\lambda)
\end{align}
for Borel sets $B\subseteq\mathbb{R}$, supported on the spectrum of $A$ and therefore in this case on \mmm{a subset of} $(-\infty, 0]$, such that given a function
\begin{align}
\Xi : (-\infty, 0] \mapsto \mathbb{R}
\label{defxi}
\end{align}
we may write \mmm{\cite[Eq.\ (11.10)]{librobern}}
\begin{align}
\Xi(A)u \, = \, \int_{(-\infty,0]} \Xi(\lambda) E(d\lambda)\bt{u},
\label{specres}
\end{align}
for $u \in \textrm{Dom}(\Xi (A))$, \mmm{where by \cite[Eq.\ (11.11)]{librobern} we have}
\begin{align}
\textrm{Dom}(\Xi (A)) \, = \, \ll u \in \mathfrak{H}: \int_{(-\infty,0]} | \Xi(\lambda)|^2 \langle E(d\lambda)u, u \rangle < \infty \rr.
\end{align}
Therefore given any $u \in \mmm{\mathfrak{H}}$ we can write
\begin{align}\label{eq2.106}
\mmm{T_tu=}\, e^{At} u \, = \, \int_{(-\infty, 0]} e^{\lambda t} E (d\lambda )u.
\end{align}
Then for all $u \in  \mathfrak{H}$ we have
\begin{align}
\left\| T_tu \right\|_{\mathfrak{H}} \, = \, \left\| \int_{(-\infty,0]} e^{\lambda t} E(d\lambda ) u \right\|_{\mathfrak{H}} \, \leq \, \left\| \int_{(-\infty,0]}E(d\lambda) u \right\|_{\mathfrak{H}} \, = \, \left\| u \right\|_{\mathfrak{H}},
\label{contract}
\end{align}
so that $T_t$ is a contraction semigroup \mmm{(e.g., see \cite[Example 11.5]{librobern})}.
Since $T_t$ is a $C_0$-semigroup we also know that for $u \in \textrm{Dom}(A)$ we have \cite[Thm 1.9.1]{kolokoltsov}
\begin{align}
\frac{d}{dt} T_tu \, = \, A T_tu \, = \, T_tAu,
\label{commutat}
\end{align}
and that the map $t \mapsto T_tu$ is the unique classical solution to the abstract Cauchy problem \cite[Proposition 6.2]{engelnagel}
\begin{align}
\begin{cases}
\frac{d}{dt} g(t) \, = \, Ag(t), \\
g(0) = u.
\end{cases}
\end{align}

\begin{te}
\label{tegensemi}
Let $f$ and $q(\lambda, t)$ be as in Theorem \ref{te11} \frev{under the same assumptions on $\nu$}. Let $x \mapsto l(x, t)$ be the density of inverse process \eqref{EtDef} of the subordinator $\sigma^f(t)$ with Laplace symbol $f$. Let $T_t$ be a $C_0$-semigroup on the Hilbert space $\l \mathfrak{H}, \langle \cdot, \cdot \rangle \r$ whose generator $A$ is self-adjoint and dissipative. The unique $C^1 \l \btrev{(}0, \infty), \mathfrak{H} \r$, \btrev{continuous at zero} and exponentially bounded solution to \eqref{general}, subject to $g(0) =u\in \textrm{Dom}(A)$ coincides with the $C^1 \l \btrev{(}0, \infty), \mathfrak{H} \r$, \btrev{continuous at zero} and exponentially bounded solution of \eqref{generaladj}. This solution is the function $q(A, t)u$ defined in the sense of \eqref{specres} for all $u \in \mathfrak{H}$, and we also have
\begin{align}
q(A, t)u \, = \, \int_0^\infty T_su \, l(s, t) \, ds,
\end{align}
a Bochner integral on $\mathfrak{H}$.
\end{te}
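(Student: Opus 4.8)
The plan is to build the solution explicitly through the spectral calculus of the self-adjoint generator $A$, reduce each of the operator equations \eqref{general} and \eqref{generaladj} to the scalar problems \eqref{problrel} and \eqref{problreladj} already solved in Theorem \ref{te11}, and then settle uniqueness together with the Bochner representation by means of the vector-valued Laplace transform.

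First I would set $g(t):=q(A,t)u=\int_{(-\infty,0]}q(\lambda,t)\,E(d\lambda)u$ in the sense of \eqref{specres}. Because $q(\lambda,t)=\mathbb{E}[e^{\lambda L^f(t)}]\le 1$ for all $\lambda\le 0$ by \eqref{qDef}, the domain condition gives $\int_{(-\infty,0]}|q(\lambda,t)|^2\langle E(d\lambda)u,u\rangle\le\|u\|_{\mathfrak H}^2$, so $g(t)\in\mathfrak H$ for every $u\in\mathfrak H$; since moreover $|\lambda q(\lambda,t)|\le|\lambda|$, the same estimate shows $g(t)\in\textrm{Dom}(A)$ whenever $u\in\textrm{Dom}(A)$, with $Ag(t)=\int_{(-\infty,0]}\lambda q(\lambda,t)\,E(d\lambda)u$, while $g(0)=u$ follows from $q(\lambda,0)=1$ in item (\ref{1rel}) of Theorem \ref{te11}.

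To verify that this $g$ solves \eqref{general} I would pair with an arbitrary $v\in\mathfrak H$ and pass to the finite complex measure $\mu_{u,v}(d\lambda):=\langle E(d\lambda)u,v\rangle$, so that $\langle g(t),v\rangle=\int_{(-\infty,0]}q(\lambda,t)\,\mu_{u,v}(d\lambda)$. Applying in turn the convolution against $\bar\nu$, the derivative $\frac{d}{dt}$, and the subtraction of $\bar\nu(t)g(0)$ inside the scalar integral, each fixed-$\lambda$ slice becomes the left-hand side of \eqref{problrel}, which equals $\lambda q(\lambda,t)$ by Theorem \ref{te11}; integrating against $\mu_{u,v}$ then returns $\int_{(-\infty,0]}\lambda q(\lambda,t)\,\mu_{u,v}(d\lambda)=\langle Ag(t),v\rangle$, and since $v$ is arbitrary \eqref{general} holds. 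The identical argument, now invoking \eqref{problreladj} and $\bar\nu^\star$, shows that the same $g$ solves \eqref{generaladj}; this is the operator lift of item (\ref{1relstar}) of Theorem \ref{te11} and yields the asserted coincidence of the two solutions. For uniqueness I would take the Laplace transform in $t$: arguing as in \eqref{eq2.36}--\eqref{eq233}, using $b=0$ and $\mathcal L[\bar\nu](\phi)=f(\phi)/\phi$ from \eqref{intpart}, any solution obeys $f(\phi)\widetilde g(\phi)-\phi^{-1}f(\phi)u=A\widetilde g(\phi)$ (closedness of $A$ lets it pass through the Bochner--Laplace integral), whence $\widetilde g(\phi)=\phi^{-1}f(\phi)\,(f(\phi)I-A)^{-1}u$; the resolvent $(f(\phi)I-A)^{-1}$ exists because the spectrum of $A$ is contained in $(-\infty,0]$ while $f(\phi)>0$, and this matches the spectral transform $\int_{(-\infty,0]}\widetilde q(\lambda,\phi)\,E(d\lambda)u$ of $q(A,t)u$ by \eqref{eq233}. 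Injectivity of the vector-valued Laplace transform then forces any solution to equal $g$. The Bochner formula finally follows by writing $q(\lambda,t)=\int_0^\infty e^{\lambda s}l(s,t)\,ds$ and interchanging with the spectral integral, which turns $\int_{(-\infty,0]}e^{\lambda s}E(d\lambda)u$ into $T_su=e^{As}u$ via \eqref{eq2.106}.

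The main obstacle is the rigorous interchange of the projection-valued integral with the time operations, above all pulling $\frac{d}{dt}$ through the spectral integral, since $\bar\nu(t-s)$ and $\bar\nu^\star(t-s)$ are generally singular as $s\uparrow t$. I would control this by dominated convergence with explicit majorants: the convolutions are handled by Fubini using $|q(\lambda,t)|\le 1$ together with the local integrability $\int_0^1\bar\nu(s)\,ds<\infty$ (and its analogue for $\bar\nu^\star$) coming from \eqref{intcondlev}; the $t$-derivative is legitimized because the fixed-$\lambda$ derivative equals $\lambda q(\lambda,t)+\bar\nu(t)$ by \eqref{problrel}, which is dominated by $|\lambda|+\bar\nu(t)$, and $\int_{(-\infty,0]}|\lambda|\,|\mu_{u,v}|(d\lambda)\le\|Au\|_{\mathfrak H}\,\|v\|_{\mathfrak H}<\infty$ for $u\in\textrm{Dom}(A)$ by Cauchy--Schwarz for the spectral measure. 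The same contraction bound $\|T_su\|_{\mathfrak H}\le\|u\|_{\mathfrak H}$ from \eqref{contract}, with $l(\cdot,t)$ a probability density, makes $\int_0^\infty T_su\,l(s,t)\,ds$ an absolutely convergent Bochner integral.
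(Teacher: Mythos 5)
Your proposal is correct and follows essentially the same route as the paper's proof: define $q(A,t)u$ by the spectral calculus, reduce \eqref{general} and \eqref{generaladj} to the scalar eigenvalue problems \eqref{problrel} and \eqref{problreladj} of Theorem \ref{te11} slice by slice, obtain the Bochner representation by writing $q(\lambda,t)$ as the Laplace transform of $l(\cdot,t)$ and invoking \eqref{eq2.106}, and prove uniqueness via the vector-valued Laplace transform and the boundedness of the resolvent $(f(\phi)-A)^{-1}$. Your weak formulation through the measures $\mu_{u,v}$ and the explicit dominating functions merely make rigorous the interchanges that the paper performs formally, and your direct derivation of $\widetilde g(\phi)=\phi^{-1}f(\phi)(f(\phi)I-A)^{-1}u$ replaces the paper's citation of \cite[Eq.\ (5.13)]{toaldopota}.
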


\begin{proof}
Using the ``functional calculus'' approach introduced above we define
\begin{align}
 q (A, t)u \, = \, \int_{(-\infty,0]} q(\lambda, t) E(d\lambda) u.
 \label{pcchio}
 \end{align}
Now we recall from Theorem \ref{te11} that the function $[0, \infty) \ni \theta \mapsto  q (-\theta, t)$ is completely monotone and may be written as the Laplace transform of the density $x \mapsto l(x, t)$ of the inverse process \mmm{\eqref{EtDef}} of the subordinator $\sigma^f(t)$ with Laplace symbol $f$.

Therefore \eqref{pcchio} becomes
\begin{align}
 q (A, t) u \, = \, & \int_{(-\infty,0]} q(\lambda, t) E(d\lambda) u \notag \\
 = \, & \int_{(-\infty,0]} \int_0^\infty e^{\lambda s} \,  l(s, t) \, ds \, E(d\lambda) u \notag \\
 = \, &\int_0^\infty  \int_{(-\infty,0]}  e^{\lambda s} \, E(d\lambda) u \,  l(s, t) \, ds \notag \\
 = \, & \int_0^\infty T_su \, l(s, t) ds
 \label{timech}
\end{align}
by \eqref{eq2.106} and the Fubini-Tonelli Theorem \btrev{used under the scalar product $\langle \cdot, \cdot \rangle = \left\| \cdot \right\|_\mathfrak{H}^2$, by a simple polarization argument.}

Note that \eqref{timech} holds for any function $ u \in \mathfrak{H}$: indeed \mmm{\cite[Example 11.5]{librobern}}
\begin{align}
\left\| q(A, t) u \right\|_{\mathfrak{H}} \, = \, \left\| \int_0^\infty T_su \, l(s, t) ds  \right\|_{\mathfrak{H}} \, \leq \, \int_0^\infty \left\| T_s u \right\|_{\mathfrak{H}} l(s, t) ds \leq \, \left\| u \right\|_{\mathfrak{H}}
\end{align}
using \eqref{contract} along with $\int_0^\infty l(s, t) ds =1$.
The fact that $q(A, t)$ maps Dom$(A)$ into itself may be ascertained by \btrev{using \cite[p. 364, formula (15)]{rudinfa} to say that $q(A,t)A \subseteq  A q(A,t)$ and then \cite[p. 364, formula (10)]{rudinfa}, together with the fact that $q(A,t)$ is a bounded operator, to say that $\text{Dom}\l q(A,t)A \r = \text{Dom}(A) \subseteq \text{Dom} \l Aq(A,t) \r = \ll u : q(A,t)u \in \text{Dom}(A)\rr$.
}

\btrev{The function $t \mapsto q(\lambda, t)$ is a monotone non-increasing function (see (2.38)) continuous on $[0, \infty)$ and continuously differentiable on $(0, \infty)$ with derivative $q^\prime (\lambda, t)$ which is bounded on $t \in [t_0, \infty)$ for any $t_0 >0$. Hence continuity and differentiability properties of $t \mapsto q(A,t)$ (in the Hilbert space topology) are direct consequences of continuity and differentiability of $q(\lambda,t)$ and of the representations
\begin{align}
& \left\| q(A,t) - q(A,s)   \right\|_{\mathfrak{H}}^2 \, = \, \int_{(-\infty,0]} \l q(\lambda, t)-q(\lambda, s)\r^2 \langle E(d\lambda)u,u \rangle,  \\
& \left\| \frac{q(A, t)u - q(A, s) u}{t-s} -q^\prime (A,t) \right\|_{\mathfrak{H}}^2 \notag \\
 = \, & \int_{(-\infty,0]} \l \frac{q(\lambda, t)-q(\lambda, s)}{t-s} -q^\prime (\lambda, t) \r^2 \langle E(d\lambda)u,u \rangle,
\label{hst}
\end{align}
taken as $s \to t$. In particular the second equality, for strictly positive $t$, shows that $(d/dt)q(A,t) = q^\prime(A,t)$ since $ q^\prime (A, t)u$ exists in $\mathfrak{H}$ for all $u \in \text{Dom}(A)$ and $t>0$:}
\begin{align}
\left\|q^\prime (A, t)u \right\|_{\mathfrak{H}} \, \leq \, \int_0^\infty \left\| AT_xu \right\|_{\mathfrak{H}} \, \mu(t, x) \, dx \, \leq \, u_{\sigma^f}(t) \left\| Au \right\|_{\mathfrak{H}} < \infty.
\end{align}

The fact that $q(A, t)$ solves \eqref{general} and \eqref{generaladj} can be ascertained as follows.
\btrev{
Since $(d/dt)q(A,t) = q^\prime (A,t)$, defined in the sense of \eqref{defxi}, then for the Caputo type operator $\mathfrak{D}_t^f$ holds that
\begin{align}
\mathfrak{D}^f_t q(A, t) \, = \,  \int_{(-\infty,0]} \mathfrak{D}_t^f q(\lambda, t)  \, E(d\lambda)u.
\end{align}}
By using \eqref{29} and Theorem \ref{te11} we have that
\begin{align}
\mathfrak{D}_t^f q(A, t)  = \,  & \int_{(-\infty, 0]} \frac{d}{dt} \int_0^t q(\lambda, s) \, \bar{\nu}(t-s) \, ds \, E(d\lambda) - \, \int_{(-\infty<, 0]} \bar{\nu}(t) q(\lambda, 0) E(d\lambda) \notag \\
 = \, &\int_{(-\infty, 0]} \l \frac{d}{dt} \int_0^t q(\lambda, s) \, \bar{\nu}(t-s) \, ds \,  - \,  \bar{\nu}(t) q(\lambda, 0) \r E(d\lambda)  \notag \\
 = \, & \int_{(-\infty,0]} \lambda q(\lambda, t) E(d\lambda) \notag \\
 = \, & A q(A,t),
 \label{sperem}
\end{align}
and this proves \eqref{general}.
The same arguments can be also applied to the function $(-\infty, 0] \ni\lambda \mapsto q^\prime (A,t)$ \btrev{since $(d/dt)q(A,t) = q^\prime(A,t)$, which is of the form \eqref{defxi},} and thus by using again Theorem \ref{te11} we get
\begin{align}
q^\prime(A, t) \, = \,& \int_{(-\infty, 0]}  q^\prime (\lambda, t) \, E(d\lambda)u \notag \\
= \, & \int_{(-\infty, 0]} \frac{d}{dt} \int_0^t \lambda q(\lambda, s) \, \bar{\nu}^\star(t-s) ds \, E(d\lambda)u \notag \\
= \, & \frac{d}{dt} \int_0^t Aq(A, s) \bar{\nu}^\star (t-s) \, ds.
\end{align}

Finally we prove uniqueness.  From \cite[Eq.\ (5.13)]{toaldopota} it follows that, for any $u\in$\,Dom$(A)$, the solution $q(t)$ of the generalized Cauchy problem
\begin{equation}\label{BTgCP}
\mathfrak{D}_t^f q=Aq,\quad q(0)=u,
\end{equation}
has Laplace transform ($t\mapsto\lambda$)
\begin{equation}\label{MMMqLT}
\widetilde q(\lambda)=(f(\lambda)-A)^{-1}\frac{f(\lambda)}{\lambda}u .
\end{equation}
In view of \eqref{29} the generalized Cauchy problem \eqref{BTgCP} is another way to write \eqref{general}, and hence \eqref{MMMqLT} also holds for any \rev{exponentially bounded} solution to \eqref{general}, for any $u\in$\,Dom$(A)$.  The remainder of the argument is due to Baeumer \cite{BBchat}.  Since $A$ generates a $C_0$-semigroup, the resolvent $(f(\lambda)-A)^{-1}$ is a bounded operator for all $f(\lambda)$ in the right half plane.  In particular $(f(\lambda)-A)^{-1} 0=0 $ and hence by the uniqueness of the Laplace transform, we have $q=0$ for initial data $u=0$. Then, given two \rev{exponentially bounded} solutions $q_1,q_2$ to \eqref{BTgCP}, their difference $q=q_1-q_2$ solves \eqref{BTgCP} with $u=0$, and hence $q_1=q_2$.  Therefore, the \rev{exponentially bounded} solution to \eqref{general} is unique.  An argument similar to \eqref{adjlapl2} shows that the \rev{exponentially bounded} solution to \eqref{generaladj} for any $u\in$\,Dom$(A)$ has the same Laplace transform \eqref{MMMqLT}, hence it is also unique.
\end{proof}

\begin{os} \normalfont
\mmm{Fractional Cauchy problems of the form \eqref{CTRWgovCaputo} with $p(x,0)=f(x)\in$ Dom$(A)$ were considered by Bazhlekova \cite{Bazhlekova} and Baeumer and Meerschaert \cite{fracCauchy}.  In this case, we have $f(\phi)=\phi^\beta$ for some $0<\beta<1$.  Distributed order fractional Cauchy problems with
\[f(\phi)=\int_0^1 \phi^\beta p(\beta)d\beta, \]
were considered by Mijena and Nane \cite{MN14} and Bazhlekova \cite{Bazh15}.  Solutions to the generalized Cauchy problem \eqref{CTRWgov}, which is equivalent to \eqref{general} or \eqref{generaladj}, were developed by Toaldo \cite{toaldopota}.}
\end{os}

\section{Semi-Markov Dynamics}
In this section we construct a semi-Markov process \mmm{\eqref{Ydef}} on a countable state space \mmm{whose dynamics are governed by the operator} equations
\begin{align}
\frac{d}{dt} \int_0^t g (s) \, \bar{\nu}(t-s) \, ds \, - \bar{\nu}(t) g (0) = \, A g (t)
\label{relaxmatrback}
\end{align}
and
\begin{align}
\mmm{\frac{d}{dt}  g (t) \,   = \, \frac{d}{dt} \int_0^t  A g (s)  \, \bar{\nu}^\star (t-s)ds}
\label{relaxmatrbackadj}
\end{align}
where $A$ is an \mmm{$|\mathcal{S}| \times |\mathcal{S}|$ matrix  (we allow a countably infinite state space $|\mathcal{S}|=\infty$)} and $g(t) = q(A, t)$ is \mmm{the operator of Thm \ref{tegensemi} defined by \eqref{pcchio} using functional calculus}.
We will work all throughout this section under the following assumptions.
\begin{enumerate}
\item[A1)] $X(t)$ is a continuous-time Markov chain with countable state-space $\mathcal{S}$, generated by $A$ and associated to the semigroup of matrices $\ll P_t \rr_{t \geq 0}$. We assume that $A$ is symmetric, and that for its elements $a_{i,j}$ it is true that $\sup \ll -a_{i,i} \rr < \infty$. The assumption $\sup \ll -a_{i,i} \rr < \infty$ implies that $X(t)$ is non explosive \cite[Thm 2.7.1]{norris}. Furthemore within such a framework it is true that $P_t$ solves the so-called Kolmogorov backward equation \cite[Thm 2.8.3]{norris}
\begin{align}
&\frac{d}{dt}P_t = AP_t,
\label{kolmback}
\end{align}
and also the forward one \cite[Thm 2.8.6]{norris}
\begin{align}
&\frac{d}{dt} P_t \, = \, P_tA,
\label{kolmforw}
\end{align}
both subject to $P_0 = \bm{1}$. If $\mathcal{S}$ is finite then $A$ is a finite matrix (hence $A$ is bounded) and the representation $P_t = e^{At}$ is true. Since we do not assume that $\mathcal{S}$ is finite (but only countable) we can use the fact that $A$ is symmetric and therefore the representation $P_t = e^{At}$ is true in the sense of \eqref{specres} which becomes in this case
\begin{align}
\bt{P_t} \, = \,  e^{At} \, = \, \sum_{j} e^{\lambda_j t} v_j v_j^\prime,
\end{align}
where $\lambda_j$ are the eigenvalues of $A$ and the $v_j$ are a orthonormal basis of eigenvectors of $A$.

\item[A2)] $Y_n$ is a (homogeneous) discrete-time Markov chain on the countable state space $\mathcal{S}$ with symmetric transition matrix $H$ and we denote by $h_{i,j}$ the elements of $H$.
\item[A3)] The r.v.'s $J_i$ are i.i.d.\ with c.d.f.\ $F_J(t) = 1-q (\lambda, t)$ \MMMrev{for some $\lambda<0$}, where $t \mapsto q (\lambda, t)$ is completely monotone by Thm \ref{te11}. Assume also that the conditions of Item (\ref{3rel}) are fulfilled.  We define $T_n \, = \, \sum_{i=1}^n J_i$ and
\begin{align}\label{Ydef}
Y(t) \, = \, Y_n, \textrm{ for } T_{n} \leq t <T_{n+1}
\end{align}
and assume that the i.i.d. r.v.'s $J_i$ are also independent from $Y_n$ and therefore $T_n$ and $Y_n$ are independent. \rev{By \MMMrev{\eqref{behavte} and \eqref{eq2.41}} we have that
\begin{align}
\lim_{t \to \infty} F_J(t) \, = \, 1-\lim_{t \to \infty} \frac{\bar{\nu}(t)}{a-\lambda} \,  = \, 1- \frac{a}{a-\lambda},
\end{align}   }
and \MMMrev{hence we will assume that} $a=0$.
\item[A4)] With $\sigma^f(t)$ we denote the subordinator with Laplace exponent \eqref{defbern} for $b=0$. Since in A3) we assumed that $t \mapsto q(\lambda, t)$ is that of Thm \ref{te11} (and is completely monotone) we must have that $\nu(0, \infty) = \infty$ and that $s \mapsto \bar{\nu}(s)$ is completely monotone. Given a subordinator $\sigma^f(t)$, $t \geq 0$, with Laplace symbol $f$ we denote the inverse process \eqref{EtDef} by $L^f(t)$, $t \geq 0$. \bt{Furthermore we assume that $f(\phi)$ is regularly varying at $0+$ for some index $\rho \in [0,1)$ and therefore by Item (\ref{3rel}) of Theorem \ref{te11} the functions $t \mapsto \bar{\nu}(t)$ and  $t \mapsto q (\lambda, t)$ are regularly varying at infinity with index $-\rho$, and $\mathds{E}J_i = \infty$ for all $i$}.
\end{enumerate}
Note that equations \eqref{relaxmatrback} and \mmm{\eqref{relaxmatrbackadj}} generalize the Kolmogorov backward equation \eqref{kolmback} to semi-Markov processes. The corresponding generalizations of \eqref{kolmforw} are
\begin{align}
\frac{d}{dt} \int_0^t g(s) \, \bar{\nu}(t-s) \, ds \, - \bar{\nu}(t) g (0) = \,  g (t) A
\label{relaxmatrforw}
\end{align}
and
\begin{align}
\frac{d}{dt}  g (t) \,   = \, \frac{d}{dt} \int_0^t  g (s) A \, \bar{\nu}^\star (t-s)ds.
\label{relaxmatrforwadj}
\end{align}

\begin{coro}
\label{terelmat}
Let $A$ be as in A1). The matrix $q (A, t)$, defined in the sense of \eqref{specres}, where $q (\lambda, t)$ is the function of Theorem \ref{te11} \frev{(under the same assumptions on $\nu$)} is the unique solution to \eqref{relaxmatrback} and \eqref{relaxmatrforw} as well as \eqref{relaxmatrbackadj} and \eqref{relaxmatrforwadj} with initial datum $g(0) = \bm{1}$ (identity matrix). Furthermore we have, using the Bochner integral, that
\begin{align}
q (A, t) \, = \, \int_0^\infty P_s \; l(s, t)ds .
\end{align}

\end{coro}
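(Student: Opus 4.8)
The plan is to recognize Corollary \ref{terelmat} as a specialization of Theorem \ref{tegensemi} to the Hilbert space $\mathfrak{H}=\ell^2(\mathcal{S})$, supplemented by an elementary transpose argument that exchanges the backward and forward equations. First I would check that the matrix $A$ of Assumption A1) meets the hypotheses of Theorem \ref{tegensemi}. Since $A$ is symmetric it determines a self-adjoint operator on $\ell^2(\mathcal{S})$, and since it generates the Markov semigroup $P_t$ its eigenvalues satisfy $\lambda_j\le 0$, so that $\langle Au,u\rangle=\sum_j\lambda_j|\langle u,v_j\rangle|^2\le 0$ for every $u\in\textrm{Dom}(A)$; thus $A$ is dissipative. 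Assumption A1) also supplies that $A$ generates the $C_0$-semigroup $\{P_t\}_{t\ge 0}$ together with the spectral representation $P_t=e^{At}$ in the sense of \eqref{specres}. Hence Theorem \ref{tegensemi} applies with $T_t=P_t$.

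Next I would observe that the backward equations \eqref{relaxmatrback} and \eqref{relaxmatrbackadj} are literally the abstract equations \eqref{general} and \eqref{generaladj} for this choice of $A$. Applying Theorem \ref{tegensemi} to each initial vector $u\in\textrm{Dom}(A)$ shows that $t\mapsto q(A,t)u$ is the unique solution of \eqref{general} with $g(0)=u$ and coincides with the solution of \eqref{generaladj}; because this holds for every such $u$, the operator $q(A,t)$ solves the matrix equations \eqref{relaxmatrback} and \eqref{relaxmatrbackadj} with initial datum $g(0)=\bm{1}$, and uniqueness is inherited columnwise from Theorem \ref{tegensemi}. The Bochner identity $q(A,t)u=\int_0^\infty P_su\,l(s,t)\,ds$ of that theorem, valid for all $u$, then yields the operator identity $q(A,t)=\int_0^\infty P_s\,l(s,t)\,ds$.

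It remains to treat the forward equations \eqref{relaxmatrforw} and \eqref{relaxmatrforwadj}, which differ from the backward ones only in replacing $Ag$ by $gA$. Here I would exploit the symmetry of $A$: since $A'=A$ and $q(A,t)$ is a function of $A$ built through the spectral calculus \eqref{specres}, the matrix $q(A,t)$ is itself symmetric, $q(A,t)'=q(A,t)$. Transposing the backward equation \eqref{relaxmatrback} satisfied by $q(A,t)$ and using $(Aq)'=q'A'=q(A,t)\,A$ converts it into the forward equation \eqref{relaxmatrforw}, and the same transposition applied to \eqref{relaxmatrbackadj} produces \eqref{relaxmatrforwadj}. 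Uniqueness for the forward pair transfers from the backward case by the same transpose correspondence, since a solution of a forward equation with $g(0)=\bm{1}$ has transpose solving the corresponding backward equation with identity initial data.

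The main obstacle is the verification, in the countably infinite case $|\mathcal{S}|=\infty$, that the (generally unbounded) matrix $A$ is a genuine self-adjoint dissipative generator on $\ell^2(\mathcal{S})$, so that Theorem \ref{tegensemi} may legitimately be invoked. This rests on the symmetry assumed in A1) together with the non-positivity of the spectrum coming from the Markov structure, which A1) records through the spectral form $P_t=e^{At}=\sum_j e^{\lambda_j t}v_jv_j'$. Once these structural facts are in place, the remainder of the proof is a direct transcription of Theorem \ref{tegensemi} plus the transpose symmetry that interchanges \eqref{relaxmatrback}--\eqref{relaxmatrbackadj} with \eqref{relaxmatrforw}--\eqref{relaxmatrforwadj}.
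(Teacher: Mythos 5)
Your proposal is correct and follows essentially the same route as the paper: the paper likewise treats the corollary as a direct consequence of Theorem \ref{tegensemi}, writes $q(A,t)=\sum_j q(\lambda_j,t)v_jv_j'=\int_0^\infty P_s\,l(s,t)\,ds$ via the spectral decomposition, and obtains the forward equations from $Aq(A,t)=q(A,t)A$. Your transpose argument for passing from backward to forward is just an equivalent packaging of that commutation (the paper verifies it explicitly in the eigenbasis), so there is no substantive difference.
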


\begin{proof}
This Corollary is a direct consequence of Thm \ref{tegensemi}. \mmm{To clarify the arguments, here we provide some details where the proof becomes simpler in the present case.}
Observe that now the generator $A$ is a matrix with non-positive eigenvalues and, for an orthonormal basis $v_j$ of eigenvectors of $A$, the spectral representation \eqref{specres} here is the matrix
\begin{align}
\Xi(A) \, = \, \sum_j \Xi(\lambda_j) v_j v_j^\prime.
\label{bello}
\end{align}
The function $q(A, t)$ is therefore the matrix
\begin{align}
 q (A, t) \, = \, & \sum_j q (\lambda_j, t) v_j v_j^\prime \notag \\
   = \, & \sum_j \int_0^\infty e^{\lambda_j s} l(s, t) ds  \, v_j v_j^\prime \notag \\
    = \, & \int_0^\infty P_s \; l(s, t) ds.
    \label{tcmatr}
\end{align}
It is clear that equation \eqref{relaxmatrforw} is also satisfied since $P_tA = AP_t$. In our case this may be easily checked
\begin{align}
Aq (A, t) \, = \, & \sum_j \lambda_j v_j v_j^\prime \sum_i \int_0^\infty e^{\lambda_i s} l(s, t) ds \, v_i v_
i^\prime \notag \\
= \, & \int_0^\infty \sum_j \sum_i \lambda_j e^{\lambda_i s} v_j v_j^\prime v_i v_i^\prime \, l(s, t) ds \notag \\
= \, & \sum_j \int_0^\infty \lambda_j e^{\lambda_j s} v_j v_j^\prime l(s, t) ds \notag \\
= \, & \sum_j \int_0^\infty e^{\lambda_j s} v_j v_j^\prime l(s, t) ds \sum_i \lambda_i v_i v_i^\prime \notag \\
= \, & q (A, t) A
\label{commmatr}
\end{align}
which finishes the proof.
\end{proof}

\begin{te}
\label{secsecsec}
\bt{The process $Y(t)$ introduced in A3) is semi-Markov and} such that the $|\mathcal{S}| \times |\mathcal{S}|$ matrix with elements
\begin{align}
q_{i,j}(t) \, = \, P \ll Y(t) = j | Y(0) =i \rr
\end{align}
satisfies \eqref{relaxmatrback} and \eqref{relaxmatrforw} as well as \eqref{relaxmatrbackadj} and \eqref{relaxmatrforwadj} with initial datum $g(0) = \bm{1}$ for $A =-\lambda ( H-\bm{1})$, where $H$ is the transition matrix of the discrete-time Markov chain $Y_n$ on $\mathcal{S}$ introduced in A2). Furthermore
\begin{align}
\l q_{i,j}(t) \r_{i,j} \, = \, q (A, t) \, = \, \int_0^\infty e^{-\lambda (H-\bm{1})s} l(s, t) \, ds,
\end{align}
where $q (A, t)$ is the function of Thm \ref{tegensemi} and $s \mapsto l(s, t)$ is the density of the process $L^f(t)$ in A4).
\end{te}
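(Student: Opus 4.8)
The plan is to reduce the whole statement to the renewal structure of $Y(t)$ and then invoke Corollary \ref{terelmat}. First I would observe that $Y(t)$ is semi-Markov essentially by construction: the embedded chain $Y_n$ is Markov with transition matrix $H$, the sojourn times $J_i$ are i.i.d.\ and independent of $\{Y_n\}$, so $(Y_n,T_n)$ is a Markov renewal process with kernel $Q_{i,j}(t)=P\{Y_{n+1}=j,\,J_{n+1}\le t\mid Y_n=i\}=h_{i,j}F_J(t)=h_{i,j}\l 1-q(\lambda,t)\r$, and $Y(t)=Y_{N(t)}$ is the associated semi-Markov process, where $N(t)=\max\{n:T_n\le t\}$ counts the renewals by time $t$. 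Since $\{Y_n\}$ and $\{J_i\}$ (hence $\{Y_n\}$ and $N(t)$) are independent, the transition probabilities factor as
\begin{align*}
q_{i,j}(t)=\sum_{n=0}^\infty \l H^n\r_{i,j}\,P\{N(t)=n\},
\end{align*}
so the theorem hinges on identifying the law of $N(t)$.

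Next I would verify that $A=-\lambda(H-\bm{1})$ meets the hypotheses of A1) so that Corollary \ref{terelmat} applies. Since $H$ is symmetric so is $A$; since $\lambda<0$ the off-diagonal entries $-\lambda h_{i,j}$ are non-negative and the rows sum to zero, so $A$ is a conservative generator; and $\sup\{-a_{i,i}\}=\sup\{-\lambda(1-h_{i,i})\}\le -\lambda<\infty$, so $A$ is bounded with non-positive (hence dissipative) spectrum. A direct expansion then gives the Poisson-type representation
\begin{align*}
e^{As}=e^{\lambda s}\sum_{n=0}^\infty \frac{(-\lambda s)^n}{n!}H^n,
\end{align*}
so that $\l e^{As}\r_{i,j}=\sum_n \l H^n\r_{i,j}\,P\{\mathcal N(s)=n\}$, where $\mathcal N$ is a Poisson process of rate $-\lambda$.

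The heart of the proof is to show $P\{N(t)=n\}=\int_0^\infty P\{\mathcal N(s)=n\}\,l(s,t)\,ds$, i.e.\ that $N(t)$ has the law of $\mathcal N(L^f(t))$ with $\mathcal N$ independent of $L^f$; I would establish this by Laplace transforms. From \eqref{eq233} the survival function $q(\lambda,t)$ of $J_i$ has transform $\widetilde q(\lambda,\phi)=\frac{f(\phi)}{\phi}\frac{1}{f(\phi)-\lambda}$, whence $\widetilde g(\phi):=\mathds{E}e^{-\phi J_i}=1-\phi\widetilde q(\lambda,\phi)=\frac{-\lambda}{f(\phi)-\lambda}$. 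Since $T_n$ has transform $\widetilde g(\phi)^n$ and $P\{N(t)=n\}=P\{T_n\le t\}-P\{T_{n+1}\le t\}$, the transform of $t\mapsto P\{N(t)=n\}$ is $\frac{1}{\phi}\widetilde g(\phi)^n\l 1-\widetilde g(\phi)\r=\frac{f(\phi)}{\phi}\frac{(-\lambda)^n}{(f(\phi)-\lambda)^{n+1}}$. On the other hand, differentiating the double transform \eqref{doulapl} of $l$, namely $\frac{f(\phi)}{\phi}\frac{1}{\theta+f(\phi)}$, $n$ times in $\theta$ and evaluating at $\theta=-\lambda$ shows that $\int_0^\infty P\{\mathcal N(s)=n\}\,l(s,t)\,ds$ has exactly the same transform in $t$; the uniqueness theorem for Laplace transforms then yields the claimed identity for $P\{N(t)=n\}$.

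Finally I would combine the pieces. Substituting this expression for $P\{N(t)=n\}$ and interchanging the sum and integral (legitimate by Tonelli, since all terms are non-negative and $\sum_n \l H^n\r_{i,j}e^{\lambda s}(-\lambda s)^n/n!=\l e^{As}\r_{i,j}$), I obtain
\begin{align*}
\l q_{i,j}(t)\r_{i,j}=\int_0^\infty e^{As}\,l(s,t)\,ds=q(A,t),\qquad A=-\lambda(H-\bm{1}),
\end{align*}
with $q(A,0)=\bm{1}$ since $L^f(0)=0$. As this is precisely the object treated in Corollary \ref{terelmat} for an admissible $A$, it follows at once that $q(A,t)$ solves \eqref{relaxmatrback}, \eqref{relaxmatrforw}, \eqref{relaxmatrbackadj} and \eqref{relaxmatrforwadj} with $g(0)=\bm{1}$, completing the proof. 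I expect the main obstacle to be the rigorous identification of the law of $N(t)$ with that of the subordinated Poisson process $\mathcal N(L^f(t))$ — carrying out the transform bookkeeping, justifying the differentiation-in-$\theta$ step, and controlling the sum--integral interchange in the possibly countably infinite state space — rather than the routine verification that $A$ fits the framework of A1).
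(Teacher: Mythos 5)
Your proposal is correct, but it takes a genuinely different route from the paper. The paper, after establishing the semi-Markov property exactly as you do, writes down the backward Markov renewal equation $q_{i,j}(t)=q(\lambda,t)\delta_{i,j}+\sum_l h_{i,l}\int_0^t q_{l,j}(s)\mathfrak{f}_J(t-s)\,ds$ from Cinlar, proves directly that $t\mapsto q_{i,j}(t)$ is continuous (a small estimate exploiting the complete monotonicity of $q(\lambda,\cdot)$), and then takes Laplace transforms of the renewal equation and rearranges algebraically --- multiplying by $f(\phi)-\lambda$ and then by $f^\star(\phi)=\phi/f(\phi)$ --- to read off the backward equations \eqref{227} and \eqref{227adj} entrywise after inversion; the forward equations and the representation $q(A,t)=\int_0^\infty e^{As}l(s,t)\,ds$ then come from the commutation argument \eqref{commmatr}. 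You instead factor $q_{i,j}(t)=\sum_n (H^n)_{i,j}\,{\mathbb P}\{N(t)=n\}$ by independence, prove the subordination identity ${\mathbb P}\{N(t)=n\}=\int_0^\infty {\mathbb P}\{\mathcal N(s)=n\}\,l(s,t)\,ds$ by matching Laplace transforms (your computations $\mathds{E}e^{-\phi J}=-\lambda/(f(\phi)-\lambda)$ and $\int_0^\infty s^ne^{-s(f(\phi)-\lambda)}ds=n!/(f(\phi)-\lambda)^{n+1}$ are both right), and then hand everything to Corollary \ref{terelmat}. In effect you prove en route the one-dimensional-marginal version of Proposition \ref{propeq}, which the paper states separately and settles by citation to the fractional Poisson literature; your route buys a cleaner reduction to the already-proved matrix corollary, while the paper's buys the governing equations directly from the renewal equation without needing the Poisson subordination picture. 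The one point you should not gloss over is the continuity needed to upgrade equality of Laplace transforms to pointwise equality: in your scheme this reduces to continuity of $t\mapsto{\mathbb P}\{N(t)=n\}$ (clear, since $T_n$ has a continuous law) and of $t\mapsto\int_0^\infty{\mathbb P}\{\mathcal N(s)=n\}l(s,t)\,ds$ (argued as for $q(\lambda,t)$ in Theorem \ref{te11}); the paper devotes the estimate \eqref{contq} to exactly this issue, so you cannot skip it entirely.
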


\begin{proof}
First note that since $Y(t) = Y_n$, for $T_n \leq  t< T_{n+1}$, we have that $Y(t) = Y_{N^{\bar{\nu}}(t)}$ where
\begin{align}\label{DefNbarnut}
N^{\bar{\nu}}(t) \, = \, \max \ll n \in \mathbb{N}: T_n \leq t  \rr,
\end{align}
and therefore $Y(t)$ is a semi-Markov process since
\begin{align}
P \l Y_n = j, J_n \leq t  | \l Y_0, T_0 \r  \cdots \l Y_{n-1}, T_{n-1} \r \r \, = \, P \l Y_n = j | Y_{n-1} = i \r \l 1-q (\lambda,t) \r,
\end{align}
due to the independence between the r.v.'s $J_i$ and the chain $Y_n$.
Therefore the $q_{i,j}$ satisfy the (backward) renewal equation \cite[Chapter 10, formula (5.5)]{cinlar}
\begin{align}
q_{i,j}(t) \, = \, & q(\lambda, t) \delta_{i,j} + \sum_{l \in \mathcal{S}} h_{i,l} \int_0^t q_{l,j} (s) \, \mathfrak{f}_J(t-s) ds
\label{reneq}
\end{align}
where $h_{i,j}$ are the elements of the symmetric transition matrix $H$ of the discrete-time chain \rev{and
\begin{align}
\mathfrak{f}_J(t) = \frac{d}{dt} \l 1- q(\lambda, t) \r.
\end{align}  }

Next we prove that $t \mapsto q_{i,j}(t)$ is continuous. \mmm{Since $t \mapsto q(\lambda, t)$ is completely monotone under A3), we can write} \begin{align}
q(\lambda, t) \, = \, \int_0^\infty e^{-t w} \frac{m(dw)}{w}
\end{align}
for some measure $m(dw)$, \rev{and hence we also have
\begin{align}
\mathfrak{f}_J(t)=\int_0^\infty e^{-tw}m(dw)
\label{cmdens}
 \end{align}
and this means that also $t \mapsto \mathfrak{f}_J (t)$ is completely monotone.
Then \eqref{reneq} yields for any $i,j \in \mathcal{S}$ and $t,h>0$
\begin{align}
& \left| q_{i,j}(t)-q_{i,j}(t+h) \right| \notag \\
\leq \, &\left| q(\lambda, t) - q(\lambda, t+h) \right| + \left| \int_0^t \sum_{l \in \mathcal{S}} h_{i,l}q_{l,j}(s) \l \mathfrak{f}_J( t-s) - \mathfrak{f}_J( t+h-s) \r ds \right| \notag \\
&+ \left|  \int_t^{t+h} \sum_{l \in \mathcal{S}} h_{i,l}q_{l,j}(s) \mathfrak{f}_J(t+h-s) ds \right| \notag \\
\leq \, &   q(\lambda, t) - q(\lambda, t+h)  +  \int_0^t \l \mathfrak{f}_J( t-s) - \mathfrak{f}_J( t+h-s) \r ds + \int_0^h  \mathfrak{f}_J  (s) ds
\label{contq}
\end{align}
where we used that $\left| \sum_{l \in \mathcal{S}} h_{i,l}q_{l,j}(s) \right| \leq \sum_{l \in \mathcal{S}} h_{i,l} =1$. Then the first and the last terms in \eqref{contq} go to zero as $h\to 0+$ since $t \mapsto q(\lambda, t)$ and $t \mapsto \mathfrak{f}_J(t)$ are completely monotone functions. For the integral we can use \eqref{cmdens} to say that
\begin{align}
\l \mathfrak{f}_J( t-s) - \mathfrak{f}_J(t+h-s) \r \, \leq \, \mathfrak{f}_J( t-s)
\end{align}
and $\int_0^t \mathfrak{f}_J( t-s) ds < \infty$ for all $t>0$. Therefore the integral in \eqref{contq} goes to zero as $h \to 0+$ by dominated convergence theorem. For $h<0$ the argument is similar.  Hence $t \mapsto q_{i,j}(t)$ is continuous.
}

Therefore the $q_{i,j}(t)$, $i,j \in \mathcal{S}$, are the unique continuous functions whose Laplace transforms satisfy (we here use \mmm{\eqref{eq2.36} and} \eqref{reneq})
\begin{align}
\widetilde{q}_{i,j}(\phi) \, = \, \frac{f(\phi)/\phi}{f(\phi)-\lambda}\delta_{i,j}- \sum_{l \in \mathcal{S}} h_{i,l} \widetilde{q}_{l,j}(\phi) \frac{\lambda}{f(\phi)-\lambda}.
\label{calc}
\end{align}
Now multiply by \mmm{$f(\phi)-\lambda$ on} both sides of \eqref{calc} and substract $q_{i,j}(0) = \delta_{i,j}$ to get
\begin{align}
f(\phi) \widetilde{q}_{i,j}(\phi)-\frac{f(\phi)}{\phi} q_{i,j}(0) \, = \, -\lambda \l \sum_{l \in \mathcal{S}} \widetilde{q}_{l,j}(\phi) h_{i,l} -\widetilde{q}_{i,j}(\phi) \r .
\label{laplincv}
\end{align}
\mmm{Now multiply by $f^\star(\phi)=\phi/f(\phi)$ to get}
\begin{align}
\phi \widetilde{q}_{i,j}(\phi)- q_{i,j}(0) \, = \, -\lambda f^\star(\phi) \l \sum_{l \in \mathcal{S}} \widetilde{q}_{l,j}(\phi) h_{i,l} -\widetilde{q}_{i,j}(\phi) \r.
\label{laplincvadj}
\end{align}
By Laplace inversion of \eqref{laplincv} and \eqref{laplincvadj} we have \mmm{using \eqref{intpart}} that $q_{i,j}(t)$ satisfies for all $i,j \in \mathcal{S}$
\begin{align}
\begin{cases}
\frac{d}{dt} \int_0^t q_{i,j}(s) \, \bar{\nu}(t-s) ds -\bar{\nu}(t) q_{i,j}(0) \, = \, -\lambda \l \sum_{l \in \mathcal{S}} q_{l,j}(t)h_{i,l}-q_{i,j}(t)  \r, \\
q_{i,j}(0) = \delta_{i,j},
\end{cases}
\label{227}
\end{align}
and
\begin{align}
\begin{cases}
\frac{d}{dt}  q_{i,j}(t) \, = \, -\lambda \frac{d}{dt} \int_0^t \l \sum_{l \in \mathcal{S}} q_{l,j}(s)h_{i,l}-q_{i,j}(s)  \r \bar{\nu}^\star(t-s) ds ,\\
q_{i,j}(0) = \delta_{i,j}.
\end{cases}
\label{227adj}
\end{align}
Note that the solution to the matrix problem, for $A=-\lambda (H-\bm{1})$,
\begin{align}
\begin{cases}
\frac{d}{dt} \int_0^t q (A,s) \, \bar{\nu}(t-s) ds -\bar{\nu}(t) q(A,0) \, = \, A \mmm{q}(A,t), \\
q(A,0)=1,
\end{cases}
\end{align}
is a matrix such that each entry satisfy the backward equation \eqref{227}.
The backward equation therefore is proved.

\mmm{The forward equation follows by \eqref{commmatr}. To verify in this special case,} since $H$ is symmetric then so is $ H-\bm{1}$ and the eigenvalues of $H-\bm{1}$ are non-positive since $H$ is a transition matrix. Therefore we can write as in \eqref{tcmatr}
\begin{align}
q_{-\lambda (H-\bm{1})} (t) \, = \,  & \int_0^\infty e^{-\lambda (H-\bm{1})s} l(s, t) ds,
\end{align}
and we know that
\begin{align}
-\lambda (H-\bm{1})q (-\lambda (H-\bm{1}),t) = q (-\lambda (H-\bm{1}), t) (-\lambda) (H-\bm{1}),
\end{align}
in view of \eqref{commmatr}.
\end{proof}

If we interpret the i.i.d. r.v.'s $J_i$ as waiting times between events in some point process then the sequence $J_i$ is a renewal process and the r.v. $T_n$ is the instant of the $n$-th event. The process counting the number of events occurred up to a certain time $t$ is the counting process $N^{\bar{\nu}}(t) = \max \ll n \in \mathbb{N}: T_n \leq t \rr$. Clearly if one considers exponentially distributed waiting times then the corresponding counting process is the Poisson process. When the waiting times are Mittag-Leffler distributed, i.e., $P \ll J > t \rr = E_{\alpha}(\lambda t^\alpha)$, $\alpha \in (0,1)$, $\lambda < 0$, a particular semi-Markov model on a graph have been considered in \cite{G15, raberto}. The authors showed that the governing equation is time-fractional. In general if the i.i.d waiting times $J_i$ have finite mean $\mu_J$ then one has by \mmm{a simple argument using the} strong law of large numbers that a.s.
\begin{align}
\lim_{t \to \infty} \frac{N^{\bar{\nu}}(t)}{t} \, = \, \frac{1}{\mu_J},
\label{313}
\end{align}
and the elementary renewal theorem \cite[Proposition 1.4]{Asmussen} states that
\begin{align}
\lim_{t \to \infty} \frac{\mathds{E}N^{\bar{\nu}}(t)}{t} \, = \, \frac{1}{\mu_J}.
\label{314}
\end{align}
These facts may be interpreted as an equivalence (in the long time behaviour) between the Poisson process and a general renewal process with finite-mean waiting times. Note that \eqref{313} and \eqref{314} means that if $\mathds{E}J_i<\infty$ then as $t \to \infty$ we have, a.s.,  $N(t) \sim N^{\bar{\nu}}(t)$. This heuristically means that a renewal process with finite mean waiting times is indistinguishable after a ``long time" from the Poisson process. Therefore when you observe the process $Y(t)$ defined in A3) with sojourn times $J_n=T_{n+1}-T_n$ having finite mean then, after a transient period it behaves like the case in which $T_{n+1}-T_n$ are exponential r.v.'s.

We have here introduced a class of renewal processes associated with waiting times $J$ such that $P \ll J >t \rr = q (\lambda, t)$ and under $A3)$ we know that $\mathds{E}J = \infty$. Therefore they never behave as a Poisson process. Corollary \ref{terelmat} and Theorem \ref{secsecsec} implies that the time-changed Markov chain $X \l L^f(t) \r$, $t \geq 0$, may be equivalently constructed starting from an embedded Markov chain $Y_n$ and by inserting between jumps the heavy-tailed waiting times $J_i$.

\mmm{In equation \eqref{Ydef} we defined the renewal process $Y(t)$ that jumps to the state $Y_n$ for the underlying discrete time Markov process at the arrival time $T_n$ of the renewal process with waiting time distribution ${\mathbb P}[J>t]=q(\lambda,t)$.  Then we have $Y(t)=Y_{N^{\bar{\nu}}(t)}$, a time change using the renewal process \eqref{DefNbarnut}.  Next we show that the same process can also be constructed by a time change using the inverse subordinator \eqref{EtDef}.  }

\begin{prop}
\label{propeq}
Let $N(t)$ be a homogeneous Poisson process with rate $\theta =-\lambda$. The time-changed process $N \l L^f(t) \r$ and the process $N^{\bar{\nu}}(t)$ are the same process.  \mmm{Hence the semi-Markov process \eqref{Ydef} is the same process as the time-changed Markov chain $Y_{N \l L^f(t) \r}$.}
\end{prop}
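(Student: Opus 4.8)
The plan is to show that the subordinated Poisson process $N(L^f(t))$ is itself a renewal counting process whose inter-arrival times are i.i.d.\ with exactly the law of the $J_i$ of A3); since $N^{\bar{\nu}}(t)$ is, by \eqref{DefNbarnut}, the renewal counting process built from those $J_i$, the two processes then have identical finite-dimensional distributions, which is what ``the same process'' means here. The final assertion about $Y(t)$ is then immediate: since $Y(t)=Y_{N^{\bar{\nu}}(t)}$ by \eqref{Ydef}, and since $Y_n$ is taken independent of the waiting-time mechanism (and $N$ and $\sigma^f$ are taken independent of the chain $Y_n$), replacing the counting process $N^{\bar{\nu}}(t)$ by the equidistributed counting process $N(L^f(t))$ yields $Y_{N(L^f(t))}\stackrel{d}{=}Y(t)$.

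First I would identify the jump times of $N(L^f(t))$. Let $R_k=\epsilon_1+\cdots+\epsilon_k$ be the arrival times of $N$, where the $\epsilon_i$ are i.i.d.\ exponential with rate $-\lambda$ and independent of $\sigma^f$. Since $N(u)\geq k\iff u\geq R_k$, the $k$-th jump of $N(L^f(t))$ occurs at $\inf\{t:L^f(t)\geq R_k\}$. Using the identity $\{L^f(t)\geq x\}=\{\sigma^f(x-)\leq t\}$ (which follows directly from \eqref{EtDef}), this infimum equals $\sigma^f(R_k-)$; and because A4) assumes $\nu(0,\infty)=\infty$, the subordinator $\sigma^f$ is strictly increasing \cite[Theorem 21.5]{satolevy} and, being a L\'evy process, has no fixed discontinuities, so that $\sigma^f(R_k-)=\sigma^f(R_k)$ almost surely. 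Hence the inter-arrival times of $N(L^f(t))$ are $\sigma^f(R_k)-\sigma^f(R_{k-1})$. Conditioning on $(\epsilon_1,\dots,\epsilon_k)$ and invoking the stationary and independent increments of $\sigma^f$ over the consecutive deterministic intervals $[R_{k-1},R_k]$ of length $\epsilon_k$, these increments are i.i.d., each distributed as $\sigma^f(\epsilon)$ with $\epsilon$ exponential of rate $-\lambda$, independent of $\sigma^f$.

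It then remains to check that $\sigma^f(\epsilon)$ has tail $q(\lambda,t)$. Conditioning on $\epsilon$ and using $\mathbb{E}[e^{-\phi\sigma^f(x)}]=e^{-xf(\phi)}$ gives $\mathbb{E}[e^{-\phi\sigma^f(\epsilon)}]=(-\lambda)/(f(\phi)-\lambda)$, so the Laplace transform of $t\mapsto P\{\sigma^f(\epsilon)>t\}$ equals $\phi^{-1}\bigl(1-\mathbb{E}[e^{-\phi\sigma^f(\epsilon)}]\bigr)=f(\phi)\big/\bigl(\phi(f(\phi)-\lambda)\bigr)$, which is exactly $\widetilde{q}(\lambda,\phi)$ from \eqref{eq233}. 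By uniqueness of the Laplace transform, $P\{\sigma^f(\epsilon)>t\}=q(\lambda,t)=P\{J_i>t\}$ as prescribed in A3), completing the identification of the waiting-time law and hence the proof.

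I expect the main obstacle to be the rigorous identification of the jump times $\sigma^f(R_k)$ and the i.i.d.\ structure of their increments, rather than the Laplace computation. One must handle the left limit in $\{L^f(t)\geq x\}=\{\sigma^f(x-)\leq t\}$, use $\nu(0,\infty)=\infty$ to guarantee $\sigma^f(R_k-)=\sigma^f(R_k)$ a.s., and carefully justify, by conditioning on the exponential clock and applying the L\'evy property of $\sigma^f$ over the consecutive random intervals, that the successive increments are \emph{jointly} independent and identically distributed rather than merely equidistributed one at a time. By contrast, once \eqref{eq233} is in hand the matching of tails is routine.
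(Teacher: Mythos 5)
Your proof is correct, but it takes a genuinely different route from the paper's. The paper disposes of the proposition in one line: it observes that ${\mathbb P}\{J_i>t\}=q(\lambda,t)={\mathbb E}[e^{\lambda L^f(t)}]$ by Theorem \ref{te11} and then simply invokes \cite[Thm 4.1]{meerpoisson}, which already asserts the distributional identity between a renewal process with such waiting times and the time-changed Poisson process. You instead re-derive the needed special case from scratch: you identify the $k$-th arrival time of $N(L^f(t))$ as $\inf\{t:L^f(t)\geq R_k\}=\sigma^f(R_k-)=\sigma^f(R_k)$ a.s., use the independent stationary increments of $\sigma^f$ over the exponential interarrival intervals of $N$ to get i.i.d.\ waiting times distributed as $\sigma^f(\epsilon)$, and match ${\mathcal L}[{\mathbb P}\{\sigma^f(\epsilon)>t\}](\phi)=f(\phi)/(\phi(f(\phi)-\lambda))$ against \eqref{eq233}. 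This is sound: the inversion identity $\{L^f(t)\geq x\}=\{\sigma^f(x-)\leq t\}$ follows from \eqref{EtDef}, the absence of fixed discontinuities plus independence of $R_k$ from $\sigma^f$ gives $\sigma^f(R_k-)=\sigma^f(R_k)$ a.s., and strict increase of $\sigma^f$ (from $\nu(0,\infty)=\infty$, \cite[Theorem 21.5]{satolevy}) guarantees the arrival times are a.s.\ distinct so the subordinated process is a genuine simple renewal counting process. What the citation buys is brevity; what your argument buys is a self-contained proof that makes explicit exactly where the hypotheses enter, and it effectively reproves the relevant case of \cite[Thm 4.1]{meerpoisson}. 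Two small points worth stating explicitly if you write this up: ``the same process'' here means equality of finite-dimensional distributions, and the Laplace-transform uniqueness step should note that $t\mapsto q(\lambda,t)$ is continuous and $t\mapsto{\mathbb P}\{\sigma^f(\epsilon)>t\}$ is monotone and right-continuous, so agreement of transforms gives agreement everywhere.
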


\begin{proof}
This is a consequence of \cite[Thm 4.1]{meerpoisson} since $P \ll J_i > t \rr = q(\lambda, t)$ for any $i$ and in view of Thm \ref{te11} it is true that $q(\lambda, t) = \mathds{E}e^{\lambda L^f(t)}.$
\end{proof}

\subsection{Classification of states}
We here investigate whenever a state $i$ is transient or recurrent for the semi-Markov process $Y(t)$, by making assumptions on the embedded chain $Y_n$. We recall that a state $i$ is recurrent if
\begin{align}
P \l \textrm{the set } \ll t \geq 0 : Y(t) = i \rr \textrm{ is unbounded } \mid Y(0) = i  \r \, = \, 1,
\label{recdef}
\end{align}
and is transient if the probability in \eqref{recdef} is zero.  \mmm{Since we take the probability of a tail event, 0 and 1 are the only possibilities.}
We have the following result.

\begin{te}
\label{thmstate}
Under A2), A3), and A4) it is true that
\begin{enumerate}
\item \label{1state} If the state $i$ is recurrent for $Y_n$ then it is recurrent for $Y(t)$
\item \label{2state} If the state $i$ is transient for $Y_n$ then it is transient for $Y(t)$
\item \label{3state} $\int_0^\infty q_{i,i}(t) dt = \infty$ independently from the fact that the state $i$ is transient or recurrent.
\end{enumerate}
We recall that $q_{i,i}(t) = P \ll Y(t) = i \mid Y(0) =i \rr = P^i \ll Y(t) =i \rr$.
\end{te}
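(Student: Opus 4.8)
The theorem concerns classification of states for the semi-Markov process $Y(t)=Y_{N^{\bar\nu}(t)}$.The plan is to reduce everything to the representation $Y(t)=Y_{N^{\bar\nu}(t)}$ established at the start of the proof of Theorem~\ref{secsecsec}, with $N^{\bar\nu}(t)$ the renewal counting process \eqref{DefNbarnut}. Recurrence and transience of $Y(t)$ are defined by the tail event in \eqref{recdef}, so by the zero-one dichotomy noted there it suffices to match this event, pathwise, with the corresponding tail event $\{Y_n=i\ \text{i.o.}\}$ for the embedded chain.

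First I would record that $N^{\bar\nu}(t)\to\infty$ almost surely as $t\to\infty$. Under A3 the waiting times $J_i$ are i.i.d., strictly positive (since $\lambda<0$ makes them non-degenerate) and proper (since $a=0$), so the partial sums $T_n=\sum_{i=1}^n J_i$ increase to $+\infty$ almost surely, forcing $N^{\bar\nu}(t)\to\infty$. This needs no appeal to the infinite-mean hypothesis of A4.

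The key step is a pathwise equivalence valid on the almost-sure event $\{N^{\bar\nu}(t)\to\infty\}$: the set $\{t\ge 0:Y(t)=i\}$ is unbounded if and only if $Y_n=i$ for infinitely many $n$. For one direction, each visit $Y_n=i$ makes $Y(t)=i$ throughout the sojourn interval $[T_n,T_{n+1})$, and since $T_n\to\infty$ infinitely many such intervals render the time-set unbounded. For the other direction I argue by contraposition: if $Y_n=i$ only for $n$ up to a finite maximum $M$, then $t\ge T_{M+1}$ forces $N^{\bar\nu}(t)>M$ and hence $Y(t)=Y_{N^{\bar\nu}(t)}\ne i$, confining the time-set to $[0,T_{M+1})$. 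Since $\{N^{\bar\nu}(t)\to\infty\}$ has probability one, this identifies the two tail events up to a null set, giving
\[
P^i\bigl(\{t:Y(t)=i\}\ \text{unbounded}\bigr)=P^i\bigl(Y_n=i\ \text{i.o.}\bigr).
\]
The right-hand side equals $1$ when $i$ is recurrent for $Y_n$ and $0$ when $i$ is transient, which is exactly Items (\ref{1state}) and (\ref{2state}).

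For Item (\ref{3state}) I would bypass any renewal computation and use a single domination. Before the first jump no transition has occurred, so on $\{J_1>t\}$ we have $Y(t)=Y_0=i$; hence, using $P\{J_i>t\}=q(\lambda,t)$ from A3,
\[
q_{i,i}(t)=P^i\{Y(t)=i\}\ge P\{J_1>t\}=q(\lambda,t).
\]
Integrating and invoking the divergence $\int_0^\infty q(\lambda,t)\,dt=\infty$ from Item (\ref{3rel}) of Theorem~\ref{te11}, whose regular-variation hypothesis holds under A4, yields $\int_0^\infty q_{i,i}(t)\,dt=\infty$ whether $i$ is transient or recurrent. The only delicate point in the whole argument is the careful verification of the pathwise equivalence together with $T_n\to\infty$ for infinite-mean waiting times; once these are in place the three claims follow by matching tail events and one elementary lower bound.
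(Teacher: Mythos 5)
Your proof is correct, and for Items (1) and (2) it takes a genuinely different route from the paper. The paper first invokes Proposition \ref{propeq} to write $Y(t)=Y_{N(L^f(t))}$ and then applies the change-of-variables formula of Meerschaert and Straka to express the total occupation time $\int_0^\infty \mathds{1}_{\{Y(t)=i\}}\,dt$ as $\sum_n \mathds{1}_{\{Y_n=i\}}$ times i.i.d.\ positive contributions coming from the Poisson point process underlying $\sigma^f$; recurrence/transience is then read off from the divergence or finiteness of that sum. You instead stay entirely with the renewal representation $Y(t)=Y_{N^{\bar\nu}(t)}$ and match the tail event $\{\,\{t:Y(t)=i\}\text{ unbounded}\}$ pathwise with $\{Y_n=i \text{ i.o.}\}$, using only that the $J_i$ are proper, strictly positive, and hence $T_n\to\infty$ a.s. Your argument is more elementary (no subordinator or point-process machinery), it correctly isolates that A4's infinite-mean/regular-variation hypothesis is not needed for the classification itself, and it arguably handles the transient case more directly: you get boundedness of the visiting set outright, whereas the paper's occupation-time route nominally only yields finiteness of its Lebesgue measure and implicitly relies on the same finitely-many-sojourns observation to conclude boundedness. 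What the paper's approach buys is an explicit identification of the occupation time in terms of the jumps of $\sigma^f$, which is of independent interest but not needed for the theorem. For Item (3) your bound $q_{i,i}(t)\ge {\mathbb P}\{J_1>t\}=q(\lambda,t)$ integrated over $t$ is the same estimate as the paper's $\int_0^\infty q_{i,i}(t)\,dt\ge \mathds{E}^i J_1=\infty$, just obtained by integrating the pointwise survival bound rather than by Fubini; both rest on $\mathds{E}J_1=\infty$ from A4 and Item (\ref{3rel}) of Theorem \ref{te11}.
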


\begin{proof}
Note that in view of Proposition \ref{propeq} we can write
\begin{align}
Y(t) \, = \, Y_{N^{\bar{\nu}}(t)} \, = \, Y_{N \l L^f(t) \r} .
\end{align}
\mmm{Since $t\mapsto \sigma^f(t)$ is a.s.\ right-continuous, unbounded, and strictly increasing, it follows from the change of variable formula in Meerschaert and Straka \cite[p.\ 1707]{meerstra} that}
\begin{align}
\int_0^\infty \mathds{1}_{\ll t \geq 0 : Y(t) = i \rr} dt
\, = \,& \int_0^\infty \mathds{1}_{\ll t \geq 0 : Y_{N \l L^f(t) \r} = i \rr} \, dt \notag \\
\, = \,&  \int_0^\infty \mathds{1}_{\ll t \geq 0 : Y_{N(t)} = i  \rr} d\sigma^f(t) \notag \\
= \, & \sum_{n}  \mathds{1}_{\ll Y_n = i \rr} \sum_{\tau_n \leq t < \tau_{n+1}} \mathpzc{e}(t),
\label{levito}
\end{align}
where $\mathpzc{e}(t)$ is the Poisson point process underlying the subordinator $\sigma^f$ with characteristic measure $\nu(s)ds$. Note that if the state $i$ is recurrent for $Y_n$ then
\begin{align}
P \l Y_n = i \textrm{ for infinitely many } n \r \, = \, 1
\end{align}
and the number of summands in \eqref{levito} is a countable infinity. Furthermore the sequence
\begin{align}
\sum_{\tau_n \leq t < \tau_{n+1}} \mathpzc{e}(t)
\end{align}
is a sequence of i.i.d. r.v.'s since $\mathpzc{e}(s)$ is a Poisson point process. Therefore \eqref{levito} is the sum of a countable infinity of i.i.d. positive r.v.'s. which diverges with probability one. This proves Item \ref{1state}. If instead the state $i$ is transient, then the number of summands in \eqref{levito} is finite, and since our subordinators are here assumed to be not subject to killing, it is true that for all $0 \leq t_1 < t_2 < \infty$
\begin{align}
0< \sum_{t_1 \leq s \leq t_2} \mathpzc{e}(s)  < \infty \textrm{ a.s. }
\end{align}
and the sum \eqref{levito} is finite since it is the sum of a finite number of finite summands. This proves \ref{2state}. Observe now that
\begin{align}
\int_0^\infty q_{i,i}(t) \, dt \, = \, &\mathds{E}^i \int_0^\infty \mathds{1}_{\ll Y(t) = i \rr} \, dt \notag \\
\geq  \, & \mathds{E}^i J_1 \, = \, \infty
\end{align}
and this proves Item \ref{3state}.
\end{proof}

\mmm{It is instructive to compare Part (3) of Theorem \ref{thmstate} with the well-known characterization of transient and recurrent states in a semi-Markov process with finite mean waiting time between jumps, using the occupation measure (or $0$-potential) \cite{cinlar}.  The semi-Markov process is formed by inserting a random waiting time $J_i$ before jumping from state $Y_{i-1}$ to state $Y_i$ in the underlying Markov chain.  Hence the number of times the process returns to its starting point is not affected.  Hence the state is recurrent for the semi-Markov process if and only if it is recurrent for the underlying Markov chain. If the occupation times (waiting times) in each state have a finite mean, then the total expected occupation time in the starting state is proportional to the number of visits.  This happens if and only if $\int_0^\infty q_{i,i}(t) dt = \infty$.   However, when the waiting times between state transitions are heavy-tailed with infinite mean, the mean time spent in the starting point by the process is always infinite.}

\section{\mmm{Examples}}
In this section, we provide some practical examples, to illustrate the application of the results in this paper.

\begin{ex}\label{Sec4Ex1}\normalfont
Consider a CTRW with iid particle jumps $X_n$ independent of the iid waiting times $W_n$.  Then a particle arrives at location $S(n)=X_1+\cdots+X_n$ at time $T_n=W_1+\cdots+W_n$.  If ${\mathbb E}[X_n]=0$ and ${\mathbb E}[X_n^2]<\infty$, then $n^{-1/2}S([nt])\Rightarrow B(t)$, a Brownian motion, by Donsker's Theorem \cite[Theorem 8.7.5]{Durrett}.  If ${\mathbb P}[W_n>t]=t^{-\beta}/\Gamma(1-\beta)$ for some $0<\beta<1$, then $n^{-1/\beta}T_{[nt]}\Rightarrow \sigma^f_t$, a $\beta$-stable subordinator with Laplace symbol $f(\phi)=\phi^\beta$ \cite[Theorem 3.41]{FCbook}.  The number of jumps by time $t>0$ is $N_t=\max\{n\geq 0:T_n\leq t\}$ and a simple argument \cite[Theorem 3.2]{meerscheflimctrw} shows that this inverse process has an inverse limit $n^{-\beta}N_{nt}\Rightarrow L^f_t$ where the inverse stable subordinator $L^f_t$ is defined by \eqref{EtDef}.  Then we have
\[n^{-\beta/2}S(N_{nt})\approx (n^{\beta})^{-1/2}S(n^{\beta}\cdot n^{-\beta}N_{nt})\approx (n^{\beta})^{-1/2}S(n^{\beta}L^f_t)\approx B(L^f_t),\]
as $n\to\infty$, see \cite[Theorem 4.2]{meerscheflimctrw} for a rigorous argument.  The probability density $p(x,t)$ of the limit process $B(L^f_t)$ solves the time-fractional diffusion equation \eqref{CTRWgov} with $\mathbb{C}_f(\partial_t)=\partial_t^\beta$, a Caputo fractional derivative of order $0<\beta<1$, and $A= D\partial^2_x$ where $D={\mathbb E}[X_n^2]/2$, see \cite[Theorem 5.1]{meerscheflimctrw} and \cite[Eq. (1.8)]{FCbook}.
Since the Caputo fractional derivative \cite[Eq.\ (2.16)]{FCbook}
\begin{equation*}
\partial_t^\beta p(x,t)=\frac{1}{\Gamma(1-\beta)}\int_0^t \frac{\partial}{\partial t}p(x,t-u)u^{-\beta}du
\end{equation*}
is related to the Riemann-Liouville fractional derivative \cite[Eq.\ (2.17)]{FCbook}
\begin{equation*}
{\mathbb D}_t^\beta p(x,t)=\frac{1}{\Gamma(1-\beta)}\frac{\partial}{\partial t}\int_0^t p(x,t-u)u^{-\beta}du
\end{equation*}
by \cite[Eq. (2.33)]{FCbook}
\begin{equation}\label{RLtoCaupto}
{\mathbb D}_t^\beta p(x,t)-p(x,0)\frac{t^{-\beta}}{\Gamma(1-\beta)}=\partial^\beta_t p(x,t),
\end{equation}
we can also write the governing equation \eqref{CTRWgov} in the form
\begin{equation}\label{RLtoC}
{\mathbb D}_t^\beta p(x,t) -p(x,0)\frac{t^{-\beta}}{\Gamma(1-\beta)}=A\ p(x,t) .
\end{equation}
The Laplace symbol $f(\phi)=\phi^\beta$ can be computed directly from \eqref{defbern} with $a=b=0$ and $\nu(dt)=\beta t^{-\beta-1}dt/\Gamma(1-\beta)$ using integration by parts and the definition of the Gamma function \cite[Proposition 3.10]{FCbook}.  Then $\bar\nu(t)=t^{-\beta}/\Gamma(1-\beta)$ and the operator $\mathcal{D}^f$ of Toaldo \cite{toaldopota} defined by \eqref{28} reduces to the Riemann-Liouville fractional derivative.  Similarly, the operator $\mathfrak{D}_t^f$ of \cite{toaldopota} defined by \eqref{29} reduces to the \bt{Caputo} fractional derivative.  Then the governing equation \eqref{general} of Toaldo \cite{toaldopota} with $g(t)=p(x,t)$ reduces to \eqref{RLtoC}.  Since the conjugate Bernstein function $f^\star(\phi)=\phi/f(\phi)=\phi^{1-\beta}$, the same calculation as for $f$ shows that $\bar\nu^\star(t)=t^{\beta-1}/\Gamma(\beta)$, and then the operator $\Phi_t$ of Magdziarz \rev{and Schilling} \cite{magda} defined by \eqref{21schi} with $M(t)=\bar\nu^\star(t)$ is the Riemann-Liouville fractional integral
\begin{equation*}
{\mathbb I}_t^\beta p(x,t)=\frac{1}{\Gamma(\beta)}\int_0^t p(x,t-u)u^{\beta-1}du=\frac{1}{\Gamma(\beta)}\int_0^t p(x,u)(t-u)^{\beta-1}du
\end{equation*}
of order $\beta$ \cite[p.\ 250]{FCbook}.   Then the governing equation \eqref{generaladj} of Magdziarz \cite{magda} with $g(t)=p(x,t)$ reduces to
\begin{align}
\frac{d}{dt}  p(x,t) \,  = \, \frac{1}{\Gamma(\beta)} \frac{d}{dt} \int_0^t A p(x,s) \, (t-s)^{\beta-1} \, ds =\frac{d}{dt} A\, {\mathbb I}_t^\beta p(x,t).
\label{generaladjEx1}
\end{align}
Since $\frac{d}{dt} {\mathbb I}_t^\beta={\mathbb D}_t^{1-\beta}$ we can also write \eqref{generaladjEx1} in the form
\[ \frac{d}{dt}  p(x,t) \,  = \, {\mathbb D}_t^{1-\beta} A\, p(x,t)= A\,  {\mathbb D}_t^{1-\beta} p(x,t),\]
which is commonly seen in applications \cite{HLS10PRL,MBK99}.  The heuristic derivation of \eqref{generaladjEx1} is to simply apply the operator ${\mathbb D}_t^{1-\beta}$ to both sides of \eqref{RLtoC}, or the equivalent form $\partial_t^\beta p=Ap$, but the initial condition requires some care.
\end{ex}

\begin{ex}\normalfont
Replacing the iid jumps $X_n$ in Example \ref{Sec4Ex1} with a convergent triangular array, we obtain a limit $B(L^f_t)$ where $B(t)$ is an arbitrary L\'evy process \cite[Theorem 3.6]{meertri} with generator $A$ given by the L\'evy-Khintchine formula \cite[Theorem 3.1.11]{RVbook}.  Then all the results of Example \ref{Sec4Ex1} hold with $A= D\partial^2_x$ replaced by this L\'evy generator, in ${\mathbb R}^1$ or in ${\mathbb R}^d$ for any finite dimension $d$ \cite[Theorem 4.1]{meertri}.  The same is true more generally for Markov generators $A$, where the jump distribution depends on the current state \cite[Theorem 4.2]{KoloCTRW}.
\end{ex}

\begin{ex}\normalfont
For a CTRW with deterministic particle jumps $X_n=1$, and the same waiting times $W_n$ as in Example \ref{Sec4Ex1}, we have $S(n)=n$ and the CTRW $S(N_t)=N_t$ converges to the inverse stable subordinator: $n^{-\beta}N_{nt}\Rightarrow L^f_t$.  The probability density $l(x,t)$ of $L^f_t$ solves the time-fractional equation \eqref{CTRWgov} with $\mathbb{C}_f(\partial_t)=\partial_t^\beta$ and $A= -\partial_x$ \cite[Eq.\ (5.7)]{hittingTime}.  Several equivalent governing equations for the inverse stable subordinator $L^f_t$ are also discussed in \cite{hittingTime}, including equation \eqref{general} of Toaldo \cite{toaldopota} (see \cite[Eq.\ (5.9)]{hittingTime}) and equation \eqref{generaladj} of Magdziarz  \cite{magda} (see \cite[Eq.\ (5.18)]{hittingTime}) with $g(t)=l(x,t)$ and  $A= -\partial_x$. In this case, the moment generating function \eqref{qDef} can be written explicitly as $q(\lambda,t)=E_\beta(\lambda t^\beta)$
in terms of the Mittag-Leffler function
\begin{equation}\label{MittegLefflerFcn}
E_\beta(z)=\sum_{j=0}^\infty \frac{z^j}{\Gamma(1+\beta j)},
\end{equation}
for any $\lambda\leq 0$, see Bingham \cite{Bingham}.  The function $q(\lambda,t)$ solves \eqref{problrel}, which can be rewritten in this case as
\[{\mathbb D}_t^\beta q(\lambda,t)-\frac{t^{-\beta}}{\Gamma(1-\beta)}=\lambda q(\lambda,t) ,\]
or equivalently, using \eqref{RLtoCaupto}, as
\[\partial_t^\beta q(\lambda,t)=\lambda q(\lambda,t) .\]
That is, the moment generating function of the inverse stable subordinator is an eigenfunction of the Caputo fractional derivative.  The function $q(\lambda,t)$ also solves \eqref{problreladj}, which can be rewritten in this case as
\[ \frac{d}{dt}  q(\lambda,t) = \lambda\,  {\mathbb D}_t^{1-\beta} q(\lambda,t).\]
Here $f(\phi)=\phi^\beta$ varies regularly at zero with index $\beta$, $\bar\nu(t)=t^{-\beta}/\Gamma(1-\beta)$ varies regularly at infinity with index $-\beta$, and Item (\ref{3rel}) of Theorem \ref{te11} shows that $t\mapsto q(\lambda,t)$ also varies regularly at infinity with index $-\beta$, with $q(\lambda,t)\sim \lambda^{-1}t^{-\beta}/\Gamma(1-\beta)$ as $t\to\infty$, compare Scalas \cite[Eq.\ (24)]{Scalas2006Lecture}.  The potential measure $U^{\sigma^f}(dt)$ has Laplace-Stieltjes transform $1/f(\phi)=\phi^{-\beta}$ \cite[Eq.\ (5.12)]{librobern}, and inverting \cite[Eq.\ (2.25)]{FCbook} shows that \eqref{potmeas} holds with $c=0$ and $u_{\sigma^f}(t)=t^{\beta-1}/\Gamma(\beta)$. Then it follows from \cite[Thm 10.3]{librobern} that $f$ is a special Bernstein function.  In fact $f^\star(\phi)=\phi^{1-\beta}$ from \eqref{defbern*} with $a^\star=b^\star=0$ and $\nu^\star(dt)=(1-\beta) t^{\beta-2}dt/\Gamma(\beta)$ concentrated on $t>0$.  Note also that \cite[Eq.\ (10.9)]{librobern} $u_{\sigma^f}(t)=\bar\nu^\star(t)$.
\end{ex}

\begin{ex}\label{ExKTS}\normalfont
Let $\nu(dt)=\beta t^{-\beta-1}e^{-ct}dt/\Gamma(1-\beta)$ for some $0<\beta<1$ and $c>0$ and compute
\begin{equation}\label{ExKTSeq1}
f(\phi)=c^\beta+\int_0^\infty\left(1-e^{-\phi t}\right)\nu(dt)=(\phi+c)^\beta
\end{equation}
using integration by parts, compare \cite[Eq.\ (7.9)]{FCbook}.  Then $\sigma^f_t$ is a tempered stable subordinator \cite[Section 7.2]{FCbook} killed at rate $c^\beta$: If $D(t)$ is a tempered stable subordinator with ${\mathbb E}[e^{-sD(t)}]=\exp[-t\{(\phi+c)^\beta-c^\beta\}]$ and $S$ is an exponential random variable independent of $D(t)$ with ${\mathbb P}[S>t]=\exp[-c^\beta t]$, then we can let
\[\sigma^f_t=\begin{cases} D(t), & \text{if }t<S,\\ \infty,& \text{if }t>S,\end{cases} \]
and it is easy to check that $f$ is the Laplace symbol of this process. Now
\begin{align}
\bar\nu(t) \, = \, \bt{ a+ \nu(t, \infty) \, = \, c^\beta +} \, \frac{\beta}{\Gamma(1-\beta)}\int_t^\infty s^{-\beta-1}e^{-cs}ds
\label{incgamma}
\end{align}
involves the incomplete Gamma function, which cannot be written in closed form.  It is well known that $\phi^\beta\widetilde u(\phi)$ is the Laplace transform of ${\mathbb D}_t^\beta u(t)$ \cite[p.\ 39]{FCbook}.  Using the shift property $\widetilde u(\phi+c)={\mathcal L}[e^{-ct}u(t)]$ twice, it follows that the tempered fractional derivative ${\mathbb D}_t^{\beta,c} u(t)=e^{-c t}{\mathbb D}^\beta_t (e^{ct}u(t))$ has Laplace symbol $f$, i.e.,
${\mathcal L}[{\mathbb D}_t^{\beta,c} u(t)]=(\phi+c)^\beta \widetilde u(\phi),$
see \cite[p.\ 209]{FCbook}.  \MMM{If $u,u'$ are in $L^1({\mathbb R})$ then one can write the tempered fractional derivative explicitly as \cite[Theorem 2.9]{TFI}
\[
{\mathbb{D}}^{\beta,c}_{+}u(t)={c}^{\beta}f(t)+\frac{\beta}{\Gamma(1-\beta)}\int_{-\infty}^{t}\frac{f(t)-f(u)}{(t-u)^{\beta+1}}
\,e^{-c(t-u)}du.
\]}
It follows from \eqref{eq2.18mmm} that
$\mathcal{D}^f u(t)={\mathbb D}_t^{\beta,c} u(t),$
and hence
the governing equation \eqref{general} of Toaldo can be written in the form
\[{\mathbb D}_t^{\beta,c} g(t) - \bar{\nu}(t) g(0)  \, = \, A g(t) .\]
Recall that ${\mathcal L}[t^{\MMM{\beta-1}}/\Gamma(\beta)]=\phi^{-\beta}$, and use the shift property again, along with \eqref{Mdef}, to see that
\[M(t)={\mathcal L}^{-1}[(\phi+c)^{-\beta}]=\frac{1}{\Gamma(\beta)}t^{\MMM{\beta-1}}e^{-ct} .\]
Then the operator $\Phi_t$ of Magdziarz and Schilling \cite{magda} defined by \eqref{21schi} can be written as
\[\Phi_tu(t) \, = \, \frac{d}{dt} \frac{1}{\Gamma(\beta)} \int_0^t u(s) M(t-s) ds=\frac{d}{dt} {\mathbb I}^{\beta,c}_t u(t), \]
where the tempered fractional integral \cite{TFI} \rev{given by}
\[{\mathbb I}^{\beta,c}_t u(t):= \frac{1}{\Gamma(\beta)} \int_0^t u(t-s) s^{\MMM{\beta-1}}e^{-cs} ds,\]
\rev{appears. Hence} the governing equation \eqref{generaladj} of Magdziarz \rev{and Schilling} \cite{magda} reduces to
\begin{align}
\frac{d}{dt}  g(t) \,  = \, \frac{1}{\Gamma(\beta)} \frac{d}{dt} \int_0^t A g(t-s) \, s^{\beta-1} e^{-cs}\, ds =\frac{d}{dt} A\, {\mathbb I}_t^{\beta,c} g(t).
\label{generaladjExTFK}
\end{align}
The potential measure $U^{\sigma^f}(dt)$ has Laplace-Stieltjes transform $1/f(\phi)=(\phi+c)^{-\beta}$ \cite[Eq.\ (5.12)]{librobern}, and inverting shows that \eqref{potmeas} holds with $c=0$ and $u_{\sigma^f}(t)=M(t)=\bar\nu^\star(t)=t^{\beta-1}e^{-ct}/\Gamma(\beta)$. Then it follows from \cite[Thm 10.3]{librobern} that $f$ is a special Bernstein function.  It is easy to check that $f(\phi)=(\phi+c)^\beta$ varies regularly at zero with index $\beta=0$, and then Item (\ref{3rel}) of Theorem \ref{te11} shows that both $\bar\nu(t)$ and $t\mapsto q(\lambda,t)$ are slowly varying at infinity.
\end{ex}

\begin{ex}\label{ExTS}\normalfont
The (unkilled) tempered stable subordinator $D(t)$ related to the Bernstein function $\l c+\phi \r^\beta - c^\beta$ is also included in our framework. The L\'evy density in this case is
\begin{align}
\nu(t) \, = \, \frac{\beta t^{-\beta-1}e^{-ct}}{\Gamma(1-\beta)} \, = \, \frac{\beta}{\Gamma(1-\beta)} \, e^{-ct } \, \int_0^\infty e^{-st} \frac{s^\beta}{\Gamma(1+\beta)} ds .
\label{prodcm}
\end{align}
Since \eqref{prodcm} is the product of two completely monotone functions, it is completely monotone \cite[Corollary 1.6]{librobern}. This means that
\begin{align}
\phi \mapsto (\phi + c)^\beta - c^\beta \, = \, \int_0^\infty \l 1-e^{-\phi t} \r \frac{\beta t^{-\beta-1}e^{-ct}}{\Gamma(1-\beta)} dt
\end{align}
is a complete Bernstein function by \cite[Def 6.1]{librobern} and therefore it is also special by \cite[Prop 7.1]{librobern}. The tail of the L\'evy measure here is similar to \eqref{incgamma}, but without the constant term:
\begin{align}
\bar{\nu}(t) \, = \, \frac{\beta}{\Gamma (1-\beta)} \int_t^\infty s^{-\beta-1}e^{-cs} ds .
\end{align}
Now \eqref{Mdef} becomes
\begin{align}
M(t) \, = \, \mathcal{L}^{-1} \left[ \l \l c+\phi \r^\beta - c^\beta \r^{-1}  \right] (t),
\end{align}
\MMM{which seems difficult to invert in closed form.
Since $a^\star =0$ in view of \eqref{asbs}, we get from \eqref{213} that
\begin{align}
\bar{\nu}^\star (t) \, = \,\nu^\star (t, \infty) \, = u_{\sigma^f}(t)
\end{align}
where $u_{\sigma^f}(t)$ is the potential density of the tempered stable subordinator.  Since $f$ is special, its conjugate $f^\star(\phi)=\phi[(\phi + c)^\beta - c^\beta]^{-1}$ is the Laplace symbol of some subordinator $\sigma^\star(t)$.}

\end{ex}

\begin{ex} \normalfont
\label{exdo}
Distributed order fractional derivatives are also included in our framework.  Let $(0,1) \ni y \mapsto  \alpha (y)$ be a function strictly between zero and one and let $p(\cdot)$ be a measure on $(0,1)$. Choose $\alpha(y)$ and $p$ in such a way that, for $s>0$, it is true that
\begin{align}
&\int_0^\infty (s \wedge 1) \int_0^1 \frac{\alpha (y) s^{-\alpha (y)-1}}{\Gamma (1-\alpha(y))} \, p(dy)\, ds \notag \\
 = \,& \int_0^1 \int_0^\infty (s \wedge 1) \, \frac{\alpha (y) s^{-\alpha (y)-1}}{\Gamma (1-\alpha(y))} \, ds \,  p(dy) < \, \infty.
\label{dolev}
\end{align}
Under \eqref{dolev}
\begin{align}
\MMM{\nu(s)=}\ \int_0^1\frac{\alpha (y) s^{-\alpha (y)-1}}{\Gamma (1-\alpha(y))} \, p(dy)
\label{levdo}
\end{align}
is a L\'evy \MMM{density} and therefore
\begin{align}
\MMM{f(\phi)}\,= \, & \int_0^\infty \l 1-e^{-s\phi} \r \, \nu(ds) = \int_0^1 \phi^{\alpha(y)} p(dy)
\label{berdo}
\end{align}
is a Bernstein function. The operator $\mathfrak{D}_{\rev{t}}^f$ corresponding to \eqref{berdo} may be viewed as a distributed order fractional derivative \cite[Remark 4.3]{toaldodo} since here
\begin{align}
\MMM{\bar\nu(s) =}\ \int_0^1 \frac{s^{-\alpha(y)}}{\Gamma (1-\alpha (y))}p(dy)
\end{align}
and therefore
\begin{align}
&\mathfrak{D}_{\rev{t}}^fu(t) \notag \\ = \, &\frac{d}{dt} \int_0^t u(s) \, \int_0^1 \frac{(t-s)^{-\alpha (y)}}{\Gamma (1-\alpha(y))} \, p(dy)\,  \, ds- u(0) \int_0^1 \frac{t^{-\alpha (y)}}{\Gamma (1-\alpha(y))} \, p(dy) \notag \\
 = \, &\int_0^1 \frac{1}{\Gamma(1-\alpha(y))} \frac{d}{dt} \int_0^t u(s) (t-s)^{-\alpha(y)} ds \, p(dy) - u(0) \int_0^1 \frac{t^{-\alpha (y)}}{\Gamma (1-\alpha(y))} \, p(dy) \notag \\
 = \, & \int_0^1 \mathbb{D}_t^{\beta(y)} u(t)p(dy) \, - \,  u(0) \int_0^1 \frac{t^{-\alpha (y)}}{\Gamma (1-\alpha(y))} \, p(dy).
\end{align}
\MMM{The traditional form of the distributed order derivative is the special case $\alpha (y) = y$.}
Now note that the function $t \mapsto t^{-\alpha(y)}$ is completely monotone for each fixed $y$ since it is the Laplace transform of the measure $s^{\alpha(y)-1}ds/\Gamma(\alpha(y))$. Therefore, when $\alpha(y)$ and $p(\cdot)$ are such that \eqref{dolev} is fulfilled, also the function
\begin{align}
\MMM{\bar\nu(t)} \, = \, & \int_0^1 \frac{1}{\Gamma(1-\alpha(y))}\int_0^\infty e^{-ts} \frac{s^{\alpha(y)-1}}{\Gamma(\alpha(y))} \, ds \, p(dy) \, \notag \\
= \, &  \int_0^\infty e^{-ts} \int_0^1 \frac{1}{\Gamma(1-\alpha(y))} \frac{s^{\alpha(y)-1}}{\Gamma(\alpha(y))} \,p(dy) \, ds
\label{412}
\end{align}
is completely monotone. Therefore \eqref{412} is the tail of a L\'evy measure with a completely monotone density by Remark \ref{rem22}. This implies that \MMM{$f$}
is a complete Bernstein function \cite[Def 6.1]{librobern} and therefore it is also special \cite[Prop. 7.1]{librobern}. Hence
\begin{align}
\MMM{f^\star(\phi)=}\ \frac{\phi}{\int_0^1 \phi^{\alpha(y)}p(dy)}
\end{align}
is a (special) Bernstein function, \MMM{and there exists a subordinator $\sigma^\star(t)$ with Fourier symbol $f^\star$.}  \MMM{From \eqref{213} we have
\begin{align}
\bar{\nu}^\star (t) \, = \,\nu^\star (t, \infty) \, = u_{\sigma^p}(t),
\end{align}
where $u_{\sigma^p}(t)$ is the potential density of the subordinator with Laplace exponent \eqref{berdo}.}
Then observe that Items (1), (2) and (3) of Theorem \ref{te11} apply to this case. In particular Item (3) here is in accordance with \cite[Thm 2.3]{kochudo}. Item (4) applies if $p$ and $\alpha$ are such that $f(\phi)=\int_0^1 \phi^{\alpha (y)}p(dy)$ is regularly varying at $0+$.  For example, if $\alpha(y)=y$ and $p(dy)=p_0(y)dy$ where $p_0$ is regularly varying at zero with some index $\gamma>-1$, then $f$ is slowly varying \cite[Lemma 3.1]{ultraslow}, and hence $\bar\nu(t)$ and $q(\lambda,t)$ are slowly varying at $t=\infty$.  This is a model for ultraslow diffusion \cite{ultraslow} where a plume of particles spreads at a logarithmic rate in time. Here the kernel of Magdziarz and Schilling \cite{magda} can be computed from
\begin{align}
\MMM{M(t)=}\ \mathcal{L}^{-1} \left[ \frac{1}{\int_0^1 \phi^{\alpha(y)}p(dy) }\right] (t).
\label{magopdo}
\end{align}
In the special case $f(\phi)=p_1\phi^{\beta_1}+p_2\phi^{\beta_2}$ (retarding subdiffusion) one can write
\[M(t)=p_2^{-1}t^{\beta_2}E_{\rev{\beta_2-\beta_1},\beta_2+1}\left(-\frac{p_1}{p_2} t^{\beta_2-\beta_1}\right) ,\]
where the two parameter Mittag-Leffler function
\[E_{s,t}(z)=\sum_{n=0}^\infty \frac{z^n}{\Gamma(sn+t)} ,\]
compare Chechkin et al. \cite[Eq.\ (16)]{retarding}.
\end{ex}

\frev{
\section*{Acknowledgements}
The authors would like to thank Vassili Kolokoltsov and Lorenzo Toniazzi, University of Warwick, for helpful discussions.  We would also like to thank the referees for useful comments. \\
M. M. Meerschaert was partially supported by ARO grant W911NF-15-1-0562 and NSF grants EAR-1344280 and DMS-1462156
}

\vspace{1cm}


\begin{thebibliography}{16}
\providecommand{\natexlab}[1]{#1}
\providecommand{\url}[1]{\texttt{#1}}
\expandafter\ifx\csname urlstyle\endcsname\relax
  \providecommand{\doi}[1]{doi: #1}\else
  \providecommand{\doi}{doi: \begingroup \urlstyle{rm}\Url}\fi






\bibitem[Applebaum(2009)]{apple}
D. Applebaum.
\newblock L\'evy Processes and Stochastic Calculus. Second Edition.
\newblock \emph{Cambridge University Press}, New York, 2009.

\bibitem[Arendt et al.(2001)]{abhn}
\rev{ W. Arendt, C. Batty, M. Hieber, and F. Neubrander.
\newblock Vector-valued Laplace transforms and Cauchy problems.
\newblock \emph{Monographs in Mathematics, Birkhaeuser-Verlag}, Berlin, 2001. }




\bibitem{Asmussen} S. Asmussen. {\it Applied Probability and Queues.} 2nd Ed., Springer-Verlag, New York, 2003.

\bibitem{fracCauchy} B. Baeumer and M.M. Meerschaert. Stochastic solutions for fractional Cauchy problems.  {\it Fractional Calculus and Applied Analysis} {4}: 481 -- 500, 2001.

\bibitem{forcing} B. Baeumer, M.M. Meerschaert and S. Kurita, Inhomogeneous fractional diffusion equations. {\it Fractional Calculus and Applied Analysis} {8}(4): 371--386, 2005.

\bibitem[Baeumer and Meerschaert(2009)]{Baeumer2009}
B. Baeumer and M.M. Meerschaert. Tempered stable {L}\'evy motion and transient super-diffusion.
  \emph{J. Comput. Appl. Math.} {233}: 2438--2448, 2010.

\bibitem{BBchat} B. Baeumer, University of Otago, private communication, 2015.

\bibitem{Bazhlekova} E. Bazhlekova. Subordination principle for fractional evolution equations. {\it Fract. Calc. Appl. Anal.} {3}({3}): {213--230}, 2000.

\bibitem{Bazh15} E. Bazhlekova. Completely monotone functions and some classes of fractional evolution equations. {\it Integral Transforms and Special Functions}, \rev{26(9): 737 -- 752, 2015.}

\bibitem [Bertoin(1996)]{bertoinb}
J. Bertoin.
\newblock {L\'evy processes}.
\newblock \emph{Cambridge University Press}, Cambridge, 1996.

\bibitem [Bertoin(1997)]{bertoins}
J. Bertoin.
\newblock {Subordinators: examples and appications}.
\newblock \emph{Lectures on probability theory and statistics (Saint-Flour, 1997)}, 1 -- 91. \emph{Lectures Notes in Math.}, 1717, Springer, Berlin, 1999.
%

\bibitem{Bingham}
N.H. Bingham. {Limit theorems for occupation times of {M}arkov processes}.
\emph{Z. Wahrsch. verw. Geb.} {17}:   1--22, 1971.
%

\bibitem [Bingham et al.(1987)]{bingam}
N.H. Bingham, C.M. Goldie and J.L. Teugels.
\newblock {Regular variation}.
\newblock \emph{Cambridge University Press}, Cambridge, 1987.



\bibitem[Chakrabarty and Meerschaert(2011)]{Chakrabarty}
A. Chakrabarty and M.M. Meerschaert. Tempered stable laws as random walk
  limits. \emph{Statist. Probab. Lett.} {81}: 989--997, 2011.

\bibitem{retarding} A.V. Chechkin, R. Gorenflo, and I.M. Sokolov. Retarding subdiffusion and accelerating superdiffusion governed
by distributed-order fractional diffusion equations.  {\it Phys. Rev. E} {66}: 046129--046135, 2002.

\bibitem [Cinlar(1975)]{cinlar}
E. Cinlar.
\newblock {Introduction to Stochastic Processes}.
\newblock \emph{Englewood Cliffs, Prentice-Hall}, NJ, 1975.

%







\bibitem{Durrett}
R.~Durrett. \emph{Probability: Theory and Examples}. Cambridge University
  Press, New York, 2010.


\bibitem [Engel and Nagel(2000)]{engelnagel}
K.-J. Engel and R. Nagel.
\newblock {One-parameter semigroups for linear evolution equations}.
\newblock \emph{Springer Science \& Business Media}, 2000.

\bibitem{Feller} W. Feller. {\it An Introduction to Probability Theory and Its Applications}. Vol. II, 2nd Ed., Wiley, New York, 1971.




\bibitem[Georgiou et al.(2015)]{G15}
N. Georgiou, I.Z. Kiss and E. Scalas.
\newblock Solvable non-Markovian dynamic network.
\emph{Phys. Rev. E}, \rev{ 042801, 2015.}
%
%
%



\bibitem[{Havriliak \& Havriliak(1994)}]{havriliak}
S. Jr. Havriliak and S.J. Havriliak. Results from an unbiased analysis
of nearly 1000 sets of relaxation data. \newblock \emph{J. of
Non-Cryst. Solids} {172 -- 174}: 297--310, 1994.
%
\bibitem{HLS10PRL}
B.I. Henry, T.A.M. Langlands, and P. Straka. \newblock {Fractional Fokker-Planck equations for subdiffusion with space- and
  time-dependent forces}.
\newblock {\em Physical Review Letters}, 105: 170602, 2010.

\bibitem[Jacob(2002)]{jacob1}
N. Jacob.
\newblock{Pseudo-Differential Operators and Markov Processes. Vol I}.
\newblock{\emph{Imperial College Press}}, London, 2002.

\bibitem [Jonscher(1983)]{rilass}
A.K. Jonscher.
\newblock {Dielectric Relaxation in Solids.}
\newblock \emph{Chelsea Dielectrics Press}, London, 1983.






\bibitem{kallenberg_ren}
O. Kallenberg.
\newblock{Splitting at backward times in regenerative sets}.
\newblock{\emph{The Annals of Probability}}, 9(5): 781 -- 799, 1981.


\bibitem[Kochubei(2008)]{kochudo}
A.N. Kochubei.
\newblock{Distributed order calculus and equations of ultraslow diffusion}.
\newblock{\emph{Journal of Mathematical Analysis and Applications}}, 340: 252 -- 281, 2008.

\bibitem[Kochubei(2011)]{kochuphys}
A.N. Kochubei.
\newblock{Distributed order derivatives and relaxation patterns}.
\newblock{\emph{J. Phys. A}}, 315203, 2009.

\bibitem[Kochubei(2011)]{kochu}
A.N. Kochubei.
\newblock{General fractional calculus, evolution equations and renewal processes}.
\newblock{\emph{Integral Equations and Operator Theory}}, 71: 583 -- 600, 2011.

\bibitem[Kolokoltsov(2009)]{KoloCTRW}
V.N. Kolokoltsov. Generalized Continuous-Time Random Walks, subordination by hitting times, and fractional dynamics. \emph{Theory Probab.
  Appl.} {53}:, 594--609, 2009.

\bibitem[Kolokoltsov(2011)]{kolokoltsov}
V.N. Kolokoltsov.
\newblock{Markov processes, semigroups and generators}.
\newblock{\emph{de Gruyter Studies in Mathematics, 38. Walter de Gruyter \& Co.}}, Berlin, 2011.


\bibitem{mainardibook} F. Mainardi. Fractional calculus and waves in linear viscoelasticity. {\it Imperial College  Press}, London, 2010.


\bibitem{mainardimittag} F. Mainardi. On some properties of the Mittag-Leffler function $E_\alpha(-t^\alpha)$, completely monotone for $t>0$ with $0 < \alpha <1$. {\it Discrete and Continuous Dynamical System Series B}: 9(7): 2267 --  2278, 2014.


\bibitem{Magd09a} M. Magdziarz. Stochastic representation of subdiffusion processes with time-dependent drift.
\emph{Stoch. Proc. Appl.}, 119: 3238--3252, 2009.

\bibitem{Magd09b} M. Magdziarz. Langevin picture of subdiffusion with infinitely divisible waiting times. \emph{J. Stat.
Phys.}, 135: 763--772, 2009.

\bibitem[Magdziarz and Schilling(2013)]{magda}
M. Magdziarz and R.L. Schilling.
\newblock{Asymptotic properties of Brownian motion delayed by inverse subordinators.}
\newblock{\emph{Proc. Amer. Math. Soc}}, \rev{143: 4485 -- 4501,} 2015.


%
%

\bibitem{RVbook}
M.M. Meerschaert and H.P. Scheffer. \emph{Limit Distributions for Sums of Independent Random Vectors:
  Heavy Tails in Theory and Practice}. John Wiley and Sons, New York, 2001.

\bibitem [Meerschaert and Scheffler(2004)]{meerscheflimctrw}
M.M. Meerschaert and H.P. Scheffer.
\newblock {Limit theorems for continuous-time random walks with infinite mean waiting times}.
\newblock \emph{Journal of Applied Probability}, 41: 623--638, 2004.

\bibitem [Meerschaert and Scheffler(2006)]{ultraslow}
M.M. Meerschaert and H.P. Scheffler. Stochastic model for ultraslow diffusion. {\it Stochastic Processes and Their Applications} 116: 1215--1235, 2006.

\bibitem [Meerschaert and Scheffler(2008)]{meertri}
M.M. Meerschaert and H.P. Scheffler.
\newblock {Triangular array limits for continuous time random walks}.
\newblock \emph{Stochastic Processes and their Applications}, 118(9): 1606 -- 1633, 2008.

\bibitem [Meerschaert et al.(2011)] {meerpoisson}
M.M. Meerschaert, E. Nane and P. Vellaisamy.
\newblock {The fractional Poisson process
and the inverse stable subordinator}.
\newblock \emph{Electronic Journal of Probability}, 16(59): 1600--1620, 2011.
\bibitem[Meerschaert and Sikorskii(2012)]{FCbook}
    M.M. Meerschaert and A.  Sikorskii. {\it Stochastic Models for Fractional Calculus}. De Gruyter,  Berlin, 2012.

\bibitem{hittingTime} M.M.\ Meerschaert and P.\ Straka. Inverse stable subordinators. {\it Mathematical Modeling of Natural Phenomena} {8}(2): 1--16, 2013.





\bibitem[Meerschaert and Straka(2014)]{meerstra}
M.M. Meerschaert and P. Straka.
\newblock {Semi-Markov approach to continuous time random walk limit processes}.
\newblock \emph{The Annals of Probability}, 42(4) : 1699 -- 1723, 2014.

\bibitem{TFI}
M.M.\ Meerschaert and F.\ Sabzikar. Stochastic integration for tempered fractional Brownian motion. \emph{Stoch. Proc. Appl.} 124: 2363--2387, 2014.

\bibitem{MBK99} R. Metzler, E. Barkai and J. Klafter. Anomalous diffusion and relaxation close to thermal equilibrium: A fractional Fokker-Planck equation approach. {\it Phys. Rev. Lett.}, 82(18): 3563--3567, 1999.

\bibitem [Metzler and Klafter(2000)]{metkla1}
R. Metzler, J. Klafter.
\newblock {The random walker's guide to anomalous diffusion: A fractional dynamics approach}.
\newblock \emph{Phys. Rep.}, 339: 1--77, 2000.
\bibitem{MN14} J.B. Mijena and E. Nane. Strong analytic solutions of fractional Cauchy problems.  {\it Proc. Amer. Math. Soc.},
142(5): 1717--1731, 2014.

%
%
%
\bibitem [Norris(1998)]{norris}
J.R. Norris.
\newblock {Markov Chains}.
\newblock \emph{Cambridge University Press}, 1998.


\bibitem [Orey(1968)]{orey}
\MMMrev{S. Orey.
\newblock{On continuity properties of infinitely divisible distribution functions}.
\newblock \emph{The Annals of Mathematical Statistics}, 39(3): 936 -- 937, 1968.}


\bibitem [Raberto et al.(2011)]{raberto}
M. Raberto, F. Rapallo and E. Scalas.
\newblock {Semi-Markov Graph Dynamics}.
\newblock \emph{Plos One}, 6(8): e23370, 2011.
\btrev{
\bibitem{rudinfa}
W. Rudin.
\newblock {Functional Analysis}.
\newblock \emph{McGraw Hill, Inc.}, New York, 1991.}



\bibitem [Sato(1999)] {satolevy}
K. Sato.
\newblock {L\'evy processes and infinitely divisible distributions}.
\newblock \emph{Cambridge University Press}, 1999.

\bibitem{Scalas2006Lecture}
E.~Scalas. {Five years of continuous-time random walks in econophysics}.
  The Complex Networks of Economic Interactions, \emph{Lecture Notes in Economics and
  Mathematical Systems},  567: 3--16, Springer, Berlin, 2006.


\bibitem[{Scher \emph{et~al.}(1991)}]{scher}
H. Scher, M.F. Shlesinger, and J.T. Bendler.  Time scale invariance
in transport and relaxation. \newblock \emph{Physics Today} {44}: 26--34,
1991.

\bibitem [Schilling et al.(2010)]{librobern}
R.L. Schilling, R. Song and Z. Vondra\v{c}ek.
\newblock {Bernstein functions: theory and applications}.
\newblock \emph{Walter de Gruyter GmbH \& Company KG}, Vol 37 of De Gruyter Studies in Mathematics Series, 2010.

\bibitem{SK06} I.M. Sokolov and J. Klafter. {Field-Induced Dispersion in Subdiffusion}. {\it Phys. Rev. Lett.}, 97: 140602, 2006.




\bibitem[Toaldo(2014)]{toaldopota}
B. Toaldo.
\newblock {Convolution-type derivatives, hitting-times of subordinators and time-changed $C_0$-semigroups.}
\newblock \emph{Potential Analysis}, 42(1): 115--140, 2015.
%
\bibitem[Toaldo(2014)]{toaldodo}
B. Toaldo.
\newblock {L\'evy mixing related to distributed order calculus, subordinators and slow diffusions.}
\newblock \emph{Journal of Mathematical Analysis and Applications}, 430(2): 1009 -- 1036, 2015.


\bibitem[Weron et al.(2010)]{weron}
K. Weron, A. Jurlevicz, M. Magdziarz, A. Weron and J. Trzmiel.
\newblock Overshooting and undershooting subordination scenario for fractional two-power-law relaxation responses.
\newblock \emph{Phys. Rev. E (3)}, 81: 1--7, 2010.

\bibitem{weron12}
K. Weron, A. Stanislavsky, A. Jurlewicz, M.M. Meerschaert, and H.-P. Scheffler, Clustered continuous time random walks: Diffusion and relaxation consequences, {\it Proceedings of the Royal Society A: Mathematical, Physical \& Engineering Sciences}, 468(2141): 1615--1628, 2012.


\end{thebibliography}
\end{document}